\theoremstyle{plain}
\newtheorem{theorem}{Theorem} 
\newtheorem{corollary}[theorem]{Corollary} 
\newtheorem{lemma}[theorem]{Lemma}
\newtheorem{remark}[theorem]{Remark}
\def\BState{\State\hskip-\ALG@thistlm}
\def\ind{\overset{\textnormal{ind}}{\sim}} 
\def\iid{\overset{\textnormal{iid}}{\sim}} 
\def\eps{\varepsilon} 
\let\dolarger\relsize} 
\def\dolarger#1{\larger[#1]}} 
\newcommand*\@@bigtimes[2]{\vphantom{\prod} 
  \vcenter{\hbox{\dolarger{4}$\m@th#1\mkern-2mu\times\mkern-2mu$}}} 
\newcommand*\bigtimes{\mathop{\mathpalette\@@bigtimes\relax}\displaylimits} 
\def\N{\mathbb{N}}\def\R{\mathbb{R}}\def\1{\mathbbm{1}}
\def\Ccal{\mathcal{C}}\def\Fcal{\mathcal{F}}\def\Pcal{\mathcal{P}}\def\Wcal{\mathcal{W}}
\def\erfc{\textnormal{erfc}}\def\sign{\textnormal{sign}}
\def\beq{\begin{equation}}
\def\eeq{\end{equation}}
\def\beqs{\begin{equation*}}
\def\eeqs{\end{equation*}}
\def\bal{\begin{align}}
\def\bals{\begin{align*}}
\title{\bf On strong posterior contraction rates for Besov-Laplace priors in the white noise model}
\author{Emanuele Dolera$^*$, Stefano Favaro$^{\star,\dagger}$ and Matteo Giordano$^\star$ \\ \\ 
$^*$University of Pavia, $^{\star}$University of Turin and $^{\dagger}$Collegio Carlo Alberto, Turin}
\date{} 
\begin{document}

\maketitle

\abstract{
In this article, we investigate the problem of estimating a spatially inhomogeneous function and its derivatives in the white noise model using Besov-Laplace priors. We show that smoothness-matching priors attains minimax optimal posterior contraction rates, in strong Sobolev metrics, over the Besov spaces $B^\beta_{11}$, $\beta > d/2$, closing a gap in the existing literature. Our strong posterior contraction rates also imply that the posterior distributions arising from Besov-Laplace priors with matching regularity enjoy a desirable plug-in property for derivative estimation, entailing that the push-forward measures under differential operators optimally recover the derivatives of the unknown regression function. The proof of our results relies on the novel approach to posterior contraction rates, based on Wasserstein distance, recently developed by Dolera, Favaro and Mainini (\textit{Probability Theory and Related Fields}, 2024). We show how this approach allows to overcome some technical challenges that emerge in the frequentist analysis of smoothness-matching Besov-Laplace priors.
}

\bigskip

\noindent\textbf{AMS subject classifications.} Primary: 62G20; secondary: 62F15, 62G08.

\bigskip

\noindent\textbf{Keywords.} Bayesian nonparametric inference, Besov prior,  derivative estimation, frequentist analysis of nonparametric Bayesian procedures, Laplace prior, spatially inhomogeneous functions, Wasserstein distance.

%


\section{Introduction}

Besov priors are a popular family of prior probability distributions for functions. They were first introduced by Lassas et al.~\cite{LSS09}, and have since then gained a widespread use in Bayesian inverse problems and medical imaging applications; see e.g.~\cite{LP01,BD06,VLS09,DHS12,R13,SE15,JPG16,DS17,ABDH18,KLSS23} as well as the overviews \cite{S10,DS17}. Besov priors are defined through random expansions on wavelet bases, assigning to the wavelet coefficients independent priors of $p$-exponential type, i.e.~with tails between the Gaussian (corresponding to $p=2$) and the Laplace distribution (corresponding to $p=1$). Among these, the Laplace case, which yields `Besov-Laplace priors', has by far enjoyed the greater success, furnishing a discretisation-invariant version \cite{LS04,LSS09} of the celebrated (finite-dimensional) total-variation prior of Rudin et al.~\cite{ROF92}, while also preserving a logarithmically concave structure that enhances analytical tractability \cite{DHS12,ADH21} and posterior computations \cite{BG15,chen2018dimension}.

	In the aforementioned application areas, Besov-Laplace priors exhibit excellent empirical performances in the recovery of `spatially inhomogeneous' objects of `bounded variation type' \cite{RVJKLMS06,LSS09,DHS12}, that is ones that are generally flat or smooth across the domain but may have localised sharp variations or even discontinuities, such as images, layered media or spiky signals \cite[Section 1]{DJ98}. Such scenarios are known to be well modelled by functions belonging to Besov spaces $B^\beta_{1q}$, $\beta\ge0$, $q\in[1,\infty]$, which measure the spatial variability in an $L^1$-sense and quantify the smoothness level through the $\ell^1$-decay of the wavelet coefficients (cf.~Section \ref{Sec:Notation} for details). In particular, the space of bounded variation functions $BV$, which is of specific interest for imaging and signal processing, is known to be related to the Besov scale through the (strict) inclusions $B^1_{11}\subset BV\subset B^1_{1\infty}$ (e.g.~\cite[Section 1]{DJ98}). Since Besov-Laplace priors induce $\ell^1$-type penalties on the wavelet coefficients, they furnish a natural Bayesian model for functions in the spaces $B^\beta_{1q}$, achieving attractive sparsity-promoting and edge-preserving properties, particularly at the level of the maximum-a-posteriori, MAP, estimators \cite{ABDH18}. On the contrary, Gaussian priors induce $\ell^2$-type penalties and thus are suited to model Sobolev-regular functions with smaller variations, and have been shown, both empirically and theoretically, to perform sub-optimally in structured recovery problems \cite{ADH21,giordano2022inability,abraham2023deep,agapiou2024laplace}. 
	
%
%
%

\subsection{Existing literature and motivation}
\label{Sec:Literature}

In the article, we are interested in the performance of posterior-based inference with Besov-Laplace priors for the recovery of (possibly) spatially inhomogeneous functions, in the large sample size limit and under the frequentist assumption that the data have been generated by some fixed `ground truth' belonging to a Besov space. Such investigation has been recently initiated by Agapiou et al.~\cite{ADH21}, building on the seminal advances in the theory of frequentist analyses of Bayesian nonparametric procedures achieved during the last two decades \cite{GGvdV00,shen2001rates,GvdV07,vdVvZ08,GN11}; see also the monograph \cite{GvdV17}, as well as \cite[Chapter 7.3]{GN16}. For Besov priors and spatially inhomogeneous ground truths, \cite{ADH21} proved minimax-optimal posterior contraction rates in the white noise model, later extended by \cite{GR22,giordano23besov,agapiou2024laplace} to, respectively, non-linear inverse problems, density estimation, and drift estimation for multi-dimensional diffusions. Further, adaptive posterior contraction rates with hierarchical procedures based on Besov priors were obtained in \cite{giordano23besov,agapiou2024adaptive}. We also mention the earlier related results by \cite{CN14,R13,AGR13}.

	A common feature in this recent line of work is the required tuning of the hyperparameters of the employed Besov priors, as across the statistical models considered in \cite{ADH21,GR22,giordano23besov,agapiou2024laplace} optimal posterior contraction rates are obtained exclusively for `undersmoothing and rescaled' priors. These are priors whose draws possess strictly lower regularity than the inferential target, and are further rescaled by a diverging power of the sample size, causing them to asymptotically vanish almost surely \citep[Section 3.1.2]{giordano23besov}. Such a prior construction is somewhat artificial, and it is in contrast with the broader literature on Gaussian priors, where optimal posterior contraction rates are typically attained with smoothness-matching priors \cite{vdVvZ08} (albeit, under traditional Sobolev- or H\"older-type regularity assumptions on the ground truth). Also, the combination of undersmoothing and re-scaling has the undesirable consequence of creating serious theoretical and computational challenges in the pursuit of adaptation, possibly necessitating the use of hierarchical models with two hyperparameters \cite{agapiou2024adaptive} or $n$-dependent hyperpriors \cite{giordano23besov}. On the other hand, the only available results for smoothness-matching Besov priors are derived, in the white noise model, in \cite[Section 5.1]{ADH21}, where `polynomially' sub-optimal posterior contraction rates are obtained. The origin of this sub-optimality is discussed in \cite[p.~21]{ADH21} (see also the remarks after Theorem 3.1 in \cite{giordano23besov}) and, in essence, lies in the involved structure of the `sieves sets' associated to Besov priors, which complicates the verification of the testing condition of \cite{GGvdV00} for posterior contraction via metric entropy bounds. Further details can be found in the discussion after Theorem \ref{Theo:Main} below. The derivation of optimal posterior contraction rates for Besov priors with matching regularity thus represents a notable gap in the existing literature \citep[Remark 2]{giordano23besov}. We will provide an answer in the positive to this open question.

%
%
%

\subsection{Our contributions}

We will re-examine the frequentist asymptotic performance of smoothness-matching Besov-Laplace priors in the white noise model, investigating the rate of posterior contraction towards (possibly) spatially inhomogeneous ground truths. Further, we will consider the problem of estimating the derivatives of the unknown regression function, for which we will adopt the simple (Bayesian) `plug-in' strategy consisting in differentiating the draws from the posterior distribution \cite{stein2012interpolation,holsclaw2013gaussian,liu2022optimal}, which is conceptually attractive since it does not require the construction (and implementation) of a separate statistical procedure. Our interest in the problem of derivative estimation is largely motivated by the natural application areas of Besov priors, where gradients often carries fundamental information that should be factor into the analysis (for example, as in the widely used total variation regularisation in imaging, \cite{ROF92}). Derivatives also emerge as the key quantities regulating the rate of change of unknown surfaces in a number of scientific disciplines, including geology, meteorology, and the environmental studies; see \cite{liu2022optimal} and references therein for a comprehensive discussion on derivative estimation.

	In our main result, Theorem \ref{Theo:Main}, we will show that smoothness-matching Besov-Laplace priors attain minimax-optimal posterior contraction rates towards ground truths in the Besov scale, without the need of the specific undersmoothing and rescaling employed in \cite{ADH21,GR22,giordano23besov,agapiou2024laplace}. In fact, for true regression functions $f_0\in B^\beta_{11}$, $\beta>d/2$, we will, unlike the previous references, obtain `strong' posterior contraction rates with respect to Sobolev $H^s$-norms, simultaneously over the natural range $s\in[0,\beta-d/2)$. This result will then imply that the plug-in posterior distributions optimally solve the derivatives estimation problem (cf.~Corollary \ref{Cor:DerivEstim}). With respect to the problem of derivative estimation, Liu and Li \cite{liu2022optimal} have recently shown that such `plug-in' property is enjoyed by Gaussian process regression (under Sobolev-type regularity assumptions), and we present here, to our knowledge, the first extension to Besov-Laplace priors in the spatially inhomogeneous setting.

	For the proof of Theorem \ref{Theo:Main}, we will adopt the novel approach to posterior contraction rates, based on Wasserstein distance, recently developed by Dolera et al.~\cite{dolera2024strong}. Our motivation is twofold: firstly, by construction, the strategy of \cite{dolera2024strong} is inherently flexible in the choice of the metric in which the speed of posterior contraction is measured, allowing in the present setting to  simultaneously handle a large collection of $H^s$-metrics. 
Secondly, as discussed in Section \ref{Sec:Literature}, in the presence of spatially inhomogeneous ground truths, the pursuit of the testing-based strategy of \cite{GGvdV00} for smoothness-matching Besov priors presents some technical challenges that generally lead to sub-optimal rates. Through the approach of \cite{dolera2024strong} we will be able, in the problem at hand, to circumvent these difficulties by employing a set of different tools that include Lipschitz properties of posterior distributions with respect to the Wasserstein distance \cite{dolera2023lipschitz}, estimates for the Poincaré constants of log-concave probability measures \cite{cattiaux2022functional}, and the so-called Laplace methods for integral approximation \cite{breitung2006asymptotic}. Thus, an additional contribution of the present article is to demonstrate how the approach of \cite{dolera2024strong} may represent a useful alternative to the well-established testing theory for posterior contraction rates, in settings where the verification of the conditions of the latter may prove to be out of reach. For the sake of presentation and the convenience of the reader, we have dedicated Section \ref{Sec:WassDyn} to providing an overview of the general argument developed in \cite{dolera2024strong}, highlighting the structure that emerges in the problem of deriving strong posterior contraction rates for Besov-Laplace priors in the white noise model.

%
%
%

\subsection{Organisation of the paper}

In Section \ref{Sec:WhiteNoise}, we formally introduce the statistical model in which we will formulate our results, alongside some necessary preliminary definitions and notation. We then provide, in Section \ref{Sec:WassDyn}, an overview of the Wasserstein dynamics approach to posterior contraction rates. In Section \ref{Sec:Main}, we introduce Besov-Laplace priors, and then state our main results concerning the asymptotic behaviour of the associated posterior distributions. We conclude in Section \ref{Sec:Discussion} with a summary and an overview on some related open research directions. Section \ref{Sec:Proofs} contains the proofs of the main technical lemmas for the proof of Theorem \ref{Theo:Main}.

%
%
%
%
%


\section{On the Wasserstein dynamics approach in the white noise model}
\label{Sec:WhiteNoise}

%
%
%

\subsection{Preliminaries and notation}
\label{Sec:Notation}

Throughout, we will consider function spaces defined on the $d$-dimensional unit cube, $[0,1]^d$, $d\in\N$. Denote by $L^p\equiv L^p([0,1]^d)$, $p\in[1,\infty]$, the usual Lebesgue spaces, equipped with norm $\|\cdot\|_{L^p}$; further denote by $\langle \cdot,\cdot\rangle_{L^2}$ the $L^2$-inner product.

	Let $\{\psi_l\}_{l=1}^\infty$ be (a singe-index reordering of) a tensor-product orthonormal wavelet basis of $L^2$, constructed from $S$-regular ($S\in\N$) compactly supported and boundary corrected Daubechies wavelets in $L^2([0,1])$; see e.g.~\cite[Appendix A]{LSS09} and \cite[Chapter 4.3]{GN16} for details. For real numbers $s\in[0,S]$ and $p\in[1,\infty)$ (respectively, the `smoothness' and `integrability' parameters), define the Besov spaces $B^s_p\equiv B^s_{pp}([0,1]^d)\subset L^p$ by
\beq
\label{Eq:BesovSpaces}
	B^s_p 
	:= \left\{f=\sum_{l=1}^\infty f_l \psi_l : \| f \|^p_{B^s_p}
	:=\sum_{l=1}^\infty l^{p(s/d+1/2)-1}|f_l|^p<\infty \right\}.
\eeq
Since $S$ can be taken to be arbitrarily large when constructing the underlying wavelet basis, we will tacitly assume the condition $s\le S$ to be satisfied throughout this article.

	For $p=2$, \eqref{Eq:BesovSpaces} corresponds to the wavelet characterisation of the traditional Hilbert-Sobolev spaces $H^s\equiv H^s([0,1]^d)$ and their norms $\|\cdot\|_{H^s}$. For $p<2$, the spaces $B^s_p$ contain functions that may be `spatially inhomogeneous', i.e.~flat in some parts of the domain and irregular (or discontinuous) in others, cf.~\cite[Section 1]{DJ98}. Among the latter, the most important instances are for $p=1$; in particular, $B^1_1$ is closely related to the space of bounded variation function, e.g.~\cite[Section 1]{DJ98}. Note that \eqref{Eq:BesovSpaces} defines a special case of the more general Besov spaces $B^s_{pq}\equiv B^s_{pq}([0,1]^d)$, $p,q\in[1,\infty]$, cf.~\cite[Chapter 4.3]{GN16}. Here, we restrict to $p=q$ for conciseness; note that the minimax rates of estimation over balls in $B^s_{pq}$ are generally known to be driven only by the first index $p$, e.g.~\cite{DJ98}.

	Denote by $\ell^p$, $p\in[1,\infty]$, the space of $p$-summable sequences of real numbers. The map $f\mapsto (f_l)_{l=1}^\infty$ establishes an isometry between $B^s_p$ and the corresponding Besov sequence space $b^s_p\subset \ell^p$. We will often identify $f$ with its sequence of wavelet coefficients, writing $f=(f_l)_{l=1}^\infty$, and the function spaces $B^s_p$ with the sequence spaces $b^s_p$.

	Let us recall the definition of the Wasserstein distances. For a separable metric space $(\Theta,\rho)$, denote by $\Pcal(\Theta)$ the collection of all Borel probability measures on $\Theta$; then, for any $q\in[1,\infty)$, the $q$-Wasserstein distance $\Wcal_q(\mu_1,\mu_2)$ between two probability measures $\mu_1,\mu_2$ on $\Theta$ satisfying
\[
	\int_\Theta \rho(\theta,\theta^*_i)^q\mu_i(d\theta)<\infty,
	\qquad \text{for some $\theta^*_i\in\Theta$}, 
	\qquad i=1,2,
\]
is defined as
\begin{equation}
\label{Eq:WassDist}
	\Wcal_q(\mu_1,\mu_2):=\inf_{\nu\in\Fcal(\mu_1,\mu_2)}
	\left(\int_{\Theta^2} \rho(x,y)^q\nu(dxdy) \right)^\frac{1}{q},
\end{equation}
where $\Fcal(\mu_1,\mu_2)$ is the family of all probability measures on $\Theta^2$ with $i^{\text{th}}$-marginal equal to $\mu_i$, $i=1,2$. See \cite[Chapter 7]{ambrosio2008gradient}.

	For absolutely continuous probability distributions $\pi$ on $\R$ we will use the same notation for the associated probability density functions $\pi(x), x\in\R$. We will denote by $\lesssim,\gtrsim$ and $\simeq$, respectively, one- or two-sided inequalities holding up to multiplicative constants. We will write $c_1,c_2,\dots$ for positive numerical constants.
	
%
%
%

\subsection{The Gaussian white noise model, product priors and posteriors}
\label{Sec:ModelPriorPost}

In the present article, our main focus is on the (Gaussian) white noise model, which consists in problem of estimating an unknown function $f\in L^2$ from observations
\beq
\label{Eq:WhiteNoise}
	X^{(n)}:=f + \frac{1}{\sqrt n}W, 
	\qquad n\in\N,
\eeq
where $W$ is a Gaussian white noise process indexed by $L^2$. Such observation scheme is canonically used in statistical theory for evaluating the performance of nonparametric procedures, e.g.~\cite{T09,GN16,GvdV17}. It is known to be asymptotically equivalent to the standard nonparametric regression model with Gaussian measurement errors under suitable assumptions on the design \cite{brown2002asymptotic,R08}; thus, up to correctly handling discretisation, similar results to the ones to follow may be expected to also hold for the latter problem.

	Given an orthonormal basis $\{\psi_l\}_{l=1}^\infty$ of $L^2$ (below, taken to be a wavelet basis as described in Section \ref{Sec:Notation}), the white noise model \eqref{Eq:WhiteNoise} may equivalently be written as a (Gaussian) sequence model by setting
\[
	X^{(n)}_l:=\langle \psi_l,X^{(n)}\rangle_{L^2}
	=\langle f,\psi_l\rangle_{L^2} + \frac{1}{\sqrt n}\langle \psi_l,W\rangle_{L^2},
	\qquad l\in\N,
\]
whereupon, denoting by $f_l:=\langle f,\psi_l\rangle_{L^2}$ and $W_l:=\langle \psi_l,W\rangle_{L^2}$, the random sequence $(X^{(n)}_l)_{l=1}^\infty$ is seen to satisfy
\beq
\label{Eq:GaussSeq}
	X^{(n)}_l = f_l + \frac{1}{\sqrt n}W_l, 
	\qquad l,n\in\N, 
	\qquad (f_l)_{l=1}^\infty\in\ell^2,
	\qquad W_l\iid N(0,1).
\eeq
In this context, observing $X^{(n)}$ in the white noise model \eqref{Eq:WhiteNoise} may be understood as observing a realisation of the random sequence $(X^{(n)}_l)_{l=1}^\infty$. We will accordingly identify $X^{(n)}=(X^{(n)}_l)_{l=1}^\infty$, denoting by $P^{(n)}_f$ the joint (Gaussian product) law of $X^{(n)}$, and by $E^{(n)}_f$ the expectation with respect to it.

	A popular approach to prior modelling in the white noise model \eqref{Eq:WhiteNoise} is via random basis expansions with independent coefficients, e.g.~\cite[Chapter 2.1]{GvdV17}, letting the prior $\Pi$ arise as the law of the random function
\beq
\label{Eq:ProdPrior}
	f = \sum_{l=1}^\infty  f_l \psi_l, \qquad f_l \ind \pi_l,
\eeq
where $\{\psi_l\}_{l=1}^\infty$ is a set of `basis' functions in $L^2$ (again, below a wavelet basis), and where the independent marginal priors $\{\pi_l\}_{l=1}^\infty$ on $\R$ are specified so that $f$ in \eqref{Eq:ProdPrior} is in $L^2$ almost surely (and $\Pi$ is supported on $L^2$). Note that this corresponds to assigning to the sequence of coefficients $(f_l)_{l=1}^\infty$ the product prior $\pi:=\otimes_{l= 1}^\infty\pi_l$ on $\R^\infty$. If, in fact, $\{\psi_l\}_{l=1}^\infty$ is an orthonormal basis of $L^2$, then given observations $X^{(n)}$ from the white noise model \eqref{Eq:WhiteNoise} (equivalently, from \eqref{Eq:GaussSeq}), the posterior distribution $\Pi(\cdot|X^{(n)})$ of $f|X^{(n)}$ is obtained by Bayes formula, e.g.~\cite[Section 7.3]{GN16}, and in view of the underlying product structure may be seen to also arise as the law of a random basis expansion as in \eqref{Eq:ProdPrior}, now with independent marginal distributions
\beq
\label{Eq:MargPost}
	f_l|X^{(n)} \ind \pi_{n,l}(\cdot|X^{(n)}_l),
	\qquad 
	\pi_{n,l}(\theta|x)
	=
	\frac{e^{-\frac{n}{2}(\theta - x)^2}\pi_l(\theta)}
	{\int_\R e^{-\frac{n}{2}(\vartheta - x)^2}\pi_l(\vartheta)d\vartheta },
	\qquad \theta,x\in\R.
\eeq

%
%
%

\subsection{The Wasserstein dynamics approach to strong posterior contraction rates}
\label{Sec:WassDyn}

The overarching question investigated in the present paper is to characterise, for Besov-Laplace priors (introduced below in Section \ref{Sec:LaplPriors}), the asymptotic properties of the posterior under the frequentist assumption that the observations $X^{(n)}$ from the white noise model \eqref{Eq:WhiteNoise} are, in reality, generated by some fixed ground truth $f_0\in L^2$, i.e.~$X^{(n)}\sim P^{(n)}_{f_0}$, studying the rate of concentration of $\Pi(\cdot|X^{(n)})$ around $f_0$ as $n\to\infty$. We are interested in such investigation under the perspective of the recent line of work on nonparametric Bayesian procedures for the recovery of spatially inhomogeneous ground truths \cite{ADH21,GR22,giordano23besov,agapiou2024laplace,agapiou2024adaptive,agapiou2024heavy}, that is functions that may be flat in some areas and `spiky' in others. Similarly to the latter references, we will then assume that $f_0\in B^\beta_1$, for some $\beta\ge0$; see Section \ref{Sec:Notation} for definitions and details.

	Furthermore, we are, unlike the aforementioned contributions, interested in characterising the asymptotic concentration of the posterior with respect to `strong' Sobolev norms $\|\cdot\|_{H^s}, \ s\ge 0$ (which, for $s>0$, are indeed stronger than the typically employed $L^2$-metric, corresponding to $s=0$). This is motivated by the largely unexplored issue of nonparametric Bayesian derivative estimation, with Besov-Laplace priors, in the presence of spatial inhomogeneity. For example, in view of the equivalence for the first order Sobolev norm
\[
	\|f\|_{H^1}\simeq \|f\|_{L^2} + \|\nabla f\|_{L^2([0,1]^d;\R^d)}, \qquad f\in H^1,
\]
cf.~\cite[Chapter 4.3]{GN16}, the rate of posterior contraction in $H^1$-metric furnishes an upper bound for the speed of concentration of the so-called (Bayesian) `plug-in' strategy, which entails inferring the unknown gradient $\nabla f$ via the conditional distribution of $\nabla f|X^{(n)}$, obtained as the push forward measure of the posterior of $f|X^{(n)}$ under the gradient operator; see Section \ref{Sec:DerivEstim}. Recalling the compact embedding $B^\beta_1\subset H^s$ holding for all $\beta>s+d/2$, e.g.~\cite[p.33]{LSS09}, we then assume that $\beta>d/2$, and for $s\in[0, \beta-d/2)$ seek to derive $H^s$-posterior contraction rates, that is positive real sequences $\xi_n\to0$ such that, as $n\to\infty$
\beq
\label{Eq:PCR}
	E_{f_0}^{(n)}\Pi(f : \|f - f_0\|_{H^s}>\xi_n|X^{(n)})\to 0.
\eeq

	To do so, we will adopt the novel `Wasserstein dynamics' approach to posterior contraction rates in general metric spaces put forth in a recent article by Dolera et al.~\cite{dolera2024strong}. The rest of this section is devoted to providing an overview of the general argument developed in the latter reference, specialising it to the problem at hand, where some additional structure emerges and can be taken advantage of in the analysis.

	Assume that $X^{(n)}\sim P^{(n)}_{f_0}$ for some unknown $f_0\in B^\beta_1$, for some $\beta>d/2$. Let $\Pi$ be a Borel probability measure on $L^2$ with support contained in $H^s$, and let $\Pi(\cdot|X^{(n)})$ be the resulting posterior distribution. For $s\in [0,\beta-d/2)$, endow $B^\beta_1$ with the $\|\cdot\|_{H^s}$ norm and set
\beq
\label{Eq:Eps}
	\eps_n 
	:= E_{f_0}^{(n)}\left[\Wcal_2(\Pi(\cdot|X^{(n)}),\delta_{f_0}) \right],
\eeq
where the 2-Wasserstein distance $\Wcal_2(\cdot,\cdot)$ is defined as in \eqref{Eq:WassDist} with $\Theta = B^\beta_1$ and $\rho$ equal to the $H^s$-metric. Then, Lemma 2.3 in \cite{dolera2024strong} shows by simple arguments that the sequence $(\varepsilon_n)_{n=1}^\infty$ defined in \eqref{Eq:Eps} provides an upper bound for the $H^s$-posterior contraction rate, in that \eqref{Eq:PCR} can be proved to hold with $\xi_n = M_n\varepsilon_n$ for any positive real sequence $M_n\to\infty$. This transforms the issue into the derivation of suitable upper bounds for the expected Wasserstein distance in the right hand side of \eqref{Eq:Eps}. Dolera et al.~\cite{dolera2024strong} outlined how this can be generally approached through a clever use of sufficient statistics, local Lipschitz-continuity properties of posterior distributions, as well as powerful analytical and probabilistic tools that include concentration inequalities and large deviation principles, and the so-called Laplace method for the approximation of integrals. See \cite[Section 2]{dolera2024strong} for an overview. In the problem at hand, following ideas from \cite{dolera2024strong} we obtain by Jensen's inequality and the triangle inequality for the Wasserstein distance that
\beq
\label{Eq:TwoWass}
	\eps_n^2 
	\le E_{f_0}^{(n)}\left[\Wcal_2^2(\Pi(\cdot|X^{(n)}),\delta_{f_0}) \right]
	\le 2E_{f_0}^{(n)}\left[\Wcal_2^2(\Pi(\cdot|X^{(n)}),\Pi(\cdot|f_0)) \right]
	+2\Wcal_2^2(\Pi(\cdot|f_0),\delta_{f_0}),
\eeq
where $\Pi(\cdot|f_0)$ is the (deterministic) product measure on $L^2$ assigning to the wavelet coefficients $(f_l)_{l=1}^\infty$ the independent marginal distributions $f_l\ind \pi_{n,l}(\cdot|f_{0,l}) $ defined as in \eqref{Eq:MargPost} with $x=f_{0,l}$. To handle the first term in the right hand side of \eqref{Eq:TwoWass}, we note that, under $P^{(n)}_{f_0}$, $X^{(n)}\to f_0$ (component-wise) almost surely as $n\to\infty$, cf.~\eqref{Eq:GaussSeq}, and therefore we may expect the probability measures $\Pi(\cdot|X^{(n)})$ and $\Pi(\cdot|f_0)$ to also be close.  The second term  $\Wcal_2(\Pi(\cdot|f_0),\delta_{f_0})$ instead measures the speed at which $\Pi(\cdot|f_0)$ converges towards $\delta_{f_0}$, resulting in a purely analytical problem that requires the study of the asymptotics of the ratio between two vanishing integrals.

	In the present setting, this line of though can be further developed exploiting the specific structural interplay between the white noise model and product priors.  Let $\{\psi_l\}_{l=1}^\infty$ be an orthonormal wavelet basis of $L^2$ as introduced in Section \ref{Sec:Notation}, and let $\Pi$ be a product prior constructed according to \eqref{Eq:ProdPrior}. Then, since $\Pi(\cdot|X^{(n)})=\otimes_{l=1}^\infty \pi_{n,l}(\cdot|X^{(n)}_l)$ and $\Pi(\cdot|f_0)=\otimes_{l=1}^\infty \pi_{n,l}(\cdot|f_{0,l})$ are both in product forms, it holds that 
	\beq
\label{Eq:FirstBound}
	\Wcal_2^2(\Pi(\cdot|X^{(n)}),\Pi(\cdot|f_0))
	=\sum_{l=1}^\infty l^{2s/d}
	\Wcal_2^2(\pi_{n,l}(\cdot|X^{(n)}_l),\pi_{n,l}(\cdot|f_{0,l})),
\eeq
see e.g.~\cite[Section 2]{panaretos2019statistical}, where in the last line the 2-Wasserstein distances are to be intended (in slight abuse of notation) as being between Borel probability measures on $\R$. Further, since
\[
	\Wcal_2^2(\Pi(\cdot|f_0),\delta_{f_0})
	= \int_{B^s_p}\|f - f_0\|^2_{H^s}\Pi(df|X^{(n)}),
\]
cf.~the proof of Lemma 2.3 in \cite{dolera2024strong}, we have, in view of the wavelet characterisation of the $H^s$-norm and the product structure of $\Pi(\cdot|f_0)$,
\bals
	\Wcal_2^2(\Pi(\cdot|f_0),\delta_{f_0})
	&=
	\int_{B^s_p}\sum_{l=1}^\infty l^{2s/d}|f_l - f_{0,l}|^2\Pi(df|f_0)\\
	&=\sum_{l=1}^\infty l^{2s/d} \int_\R |\theta - f_{0,l}|^2\pi_{n,l}(\theta|f_0)
	=\sum_{l=1}^\infty l^{2s/d}\Wcal_2^2(\pi_{n,l}(\cdot|f_{0,l}),\delta_{f_{0,l}})
	.
\end{align*}
Combining the latter identity with \eqref{Eq:TwoWass} and \eqref{Eq:FirstBound} then yields
\beq
\label{Eq:TwoSeries}
\begin{split}
	\eps_n^2
	\le 
	2\sum_{l=1}^\infty l^{2s/d}E_{f_0}^{(n)}
	\left[ \Wcal_2^2(\pi_{n,l}(\cdot|X^{(n)}_l),\pi_{n,l}(\cdot|f_{0,l})\right]
	+2\sum_{l=1}^\infty l^{2s/d}\Wcal_2^2(\pi_{n,l}(\cdot|f_{0,l}),\delta_{f_{0,l}}).
\end{split}
\eeq
This shows that, in the white noise model, the problem of deriving (upper bounds to) $H^s$-posterior contraction rates for general product priors can be reduced to the study of the asymptotics of two specific sequences of Wasserstein distances between univariate distributions, thereby reducing, in essence, the issue from an infinite-dimensional setting to a finite-dimensional one, which significantly enhance analytical tractability.

	Below, we will upper bound the terms $\Wcal_2^2(\pi_{n,l}(\cdot|X^{(n)}_l),\pi_{n,l}(\cdot|f_{0,l}))$ appearing in the first series of \eqref{Eq:TwoSeries} by deriving Lipschitz properties of the marginal posteriors with respect to the 2-Wasserstein distance, that is
\beq\label{Eq:LipProp}
	\Wcal_2(\pi_{n,l}(\cdot|X^{(n)}_l),\pi_{n,l}(\cdot|f_{0,l}))
	\le C_{n,l}|X^{(n)}_l - f_{0,l}|,
\eeq
for constants $C_{n,l}>0$ that can be explicitly characterised using some recent results in the literature on functional inequalities and Poincaré constant estimates for log-concave measures \cite{cattiaux2022functional,dolera2023lipschitz}. See the proof of Lemma \ref{Lem:StochPart}. To upper bound the second series of \eqref{Eq:TwoSeries}, we will study the precise asymptotics (as $l,n\to\infty$) of the terms
\[
	\Wcal_2^2(\pi_{n,l}(\cdot|f_{0,l}),\delta_{f_{0,l}})
	=
	\frac{\int_\R |\theta-f_{0,l}|^2e^{-\frac{n}{2}(\theta - f_{0,l})^2}\pi_l(\theta)d\theta}
	{\int_\R e^{-\frac{n}{2}(\vartheta - f_{0,l})^2}\pi_l(\vartheta)d\vartheta },
\]
using the Laplace method for integral approximation, e.g.~\cite{breitung2006asymptotic}. See the proof of Lemma \ref{Lem:DetPart}.

\begin{remark}[Extension to other basis and metrics]
In the above steps, we restricted to wavelet bases and Sobolev norms for concreteness, motivated by our focus on Besov-Laplace priors (which are wavelet-based, cf.~Section \ref{Sec:LaplPriors}) and the applications to derivative estimation (cf.~Section \ref{Sec:DerivEstim}). However, the argument could also be repeated for any other orthonormal basis of $L^2$ and for any metric that may be represented as a series of coefficients in the considered basis.
\end{remark}

%
%
%
%
%

\section{Main results}
\label{Sec:Main}

%
%
%

\subsection{Besov-Laplace priors}
\label{Sec:LaplPriors}

Besov-Laplace priors represent a particular sub-class within the popular family of Besov priors introduced by Lassas et al.~\cite{LSS09}, and have been routinely employed as natural models for spatially inhomogeneous functions in a wide range of applications, including inverse problems \cite{leporini2001bayesian,DHS12,KLNS12} and imaging \cite{bioucas2006bayesian,niinimaki2007bayesian,sakhaee2015spline}. Such priors are defined as random wavelet expansions with independent coefficients following scaled Laplace distributions: let $\{\psi_l\}_{l=1}^\infty$ be an orthonormal wavelet basis of $L^2$, cf.~Section \ref{Sec:Notation}, and for any $\alpha>0$ define the \textit{$\alpha$-regular Besov-Laplace prior} $\Pi_\alpha$ on $L^2$ as the law of the random function \eqref{Eq:ProdPrior}, where
\beq
\label{Eq:LaplPrior}
	f_l\ind \pi_l\equiv \pi_{\alpha,l}, \qquad \pi_{\alpha,l}(\theta) 
	= \frac{l^{1/2+\alpha/d}}{2}e^{- l^{1/2+\alpha/d}|\theta|},\qquad \theta\in\R.
\eeq

	In other words, under $\Pi_\alpha$, each wavelet coefficient $f_l$ is distributed as the random variable $l^{-1/2-\alpha/d}Z_l$, where $Z_l\iid \text{Laplace}$. Since $(l^{-1/2-\alpha/d})_{l=1}^\infty\in\ell^2$ for all $\alpha>0$, the support of $\Pi_\alpha$ can be shown to be equal to $L^2$; further, the parameter $\alpha$ determines the regularity of the realisations of the draws from $\Pi_\alpha$: in particular, $\Pi_\alpha(B^s_p)=1$ for all $s<\alpha$ and all $p\ge 1$, while $\Pi_\alpha(B^s_p)=0$ whenever $s\ge\alpha$, for any $p$. See e.g.~\cite[Section 5.1]{ADH21} for details. Note that in the terminology of \cite{LSS09}, $\Pi_\alpha$ is called a $B^{\alpha+d}_{11}$-prior.

	Given observations $X^{(n)}\sim P_f^{(n)}$ from the white noise model \eqref{Eq:WhiteNoise}, the posterior $\Pi_\alpha(\cdot|X^{(n)})$ of $f|X^{(n)}$ arising from the above $\alpha$-regular Besov-Laplace prior $\Pi_\alpha$ follows as illustrated in Section \ref{Sec:ModelPriorPost}, and may be seen to assign to the wavelet coefficients the independent marginal distributions
\beq
\label{Eq:LaplPost}
	f_l|X^{(n)} \ind \pi_{n,l}(\cdot|X^{(n)}_l)\equiv\pi_{\alpha,n,l}(\cdot|X^{(n)}_l),
	\quad
	\pi_{\alpha,n,l}(\theta|x)
	=
	\frac{e^{-\frac{n}{2}(\theta - x)^2
	-l^{1/2+\alpha/d}|\theta|}}
	{\int_\R e^{-\frac{n}{2}(\vartheta - x)^2-l^{1/2+\alpha/d}|\vartheta|}
	 d\vartheta},
\eeq
where $\theta,x\in\R$.

%
%
%

\subsection{Strong posterior contraction rates for Besov-Laplace priors}
\label{Sec:StrongPCR}

We are now in a position to state and prove our main result concerning the asymptotic concentration, in $H^s$-metric, of the posterior distribution around the ground truth.

\begin{theorem}\label{Theo:Main}
Assume observations $X^{(n)}\sim P^{(n)}_{f_0}$ from the white noise model \eqref{Eq:WhiteNoise} for some fixed $f_0\in B^\beta_1$, some $\beta> d/2$. Let $\Pi_\beta(\cdot|X^{(n)})$ be the posterior distribution arising from a $\beta$-regular Besov-Laplace prior, defined as in \eqref{Eq:LaplPost}  with $\alpha=\beta$. Then, for all $ s\in[0, \beta-d/2)$,
\[
	E_{f_0}^{(n)}\Pi_\beta\left(f : \|f - f_0\|_{H^s}> M_nn^{-(\beta - s)/(2\beta + d)}
	\Big|X^{(n)}\right)\to 0
\]
as $n\to\infty$ for all positive real sequences $M_n\to\infty$.
\end{theorem}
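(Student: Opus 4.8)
The plan is to apply the Wasserstein dynamics bound \eqref{Eq:TwoSeries}, which reduces the problem to controlling two series of univariate Wasserstein distances associated to the marginal posteriors $\pi_{\beta,n,l}(\cdot|\cdot)$ in \eqref{Eq:LaplPost}. Write $\lambda_l := l^{1/2+\beta/d}$ for the Laplace rate parameters. The key observation is that the relevant scale separating the `signal-dominated' from the `prior-dominated' regimes is the index $L_n$ at which the penalty strength $\lambda_l$ balances the noise level $\sqrt n$, i.e.~$\lambda_l \simeq \sqrt n$, which occurs at $l \simeq L_n := n^{d/(2\beta+d)}$. For $l \lesssim L_n$ the posterior marginal is essentially the (Gaussian-like) likelihood localised near $X^{(n)}_l$, while for $l \gtrsim L_n$ the Laplace penalty dominates and the posterior marginal concentrates near $0$ at a fast rate.

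For the first (stochastic) series, I would establish the Lipschitz estimate \eqref{Eq:LipProp} with constants $C_{n,l}$ controlled via Poincaré-constant bounds for the log-concave densities $\pi_{\beta,n,l}(\cdot|x)$: since $\theta \mapsto \tfrac n2(\theta-x)^2 + \lambda_l|\theta|$ is a $n$-strongly convex perturbation of a convex function, the Brascamp--Lieb inequality (or the estimates of \cite{cattiaux2022functional,dolera2023lipschitz}) gives a Poincaré constant of order $1/n$, hence $C_{n,l}\lesssim 1$ uniformly in $l$, actually with room to spare for large $l$. Then
\[
	\sum_{l=1}^\infty l^{2s/d} E^{(n)}_{f_0}\!\left[\Wcal_2^2(\pi_{\beta,n,l}(\cdot|X^{(n)}_l),\pi_{\beta,n,l}(\cdot|f_{0,l}))\right]
	\lesssim \sum_{l=1}^\infty l^{2s/d} C_{n,l}^2\, E^{(n)}_{f_0}|X^{(n)}_l-f_{0,l}|^2
	= \frac1n\sum_{l=1}^\infty l^{2s/d}C_{n,l}^2,
\]
using \eqref{Eq:GaussSeq}. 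The naive bound $C_{n,l}\lesssim1$ makes this series diverge, so the care is in showing $C_{n,l}$ decays for $l\gtrsim L_n$: when the penalty dominates, the posterior density is concentrated on a shrinking interval and its Poincaré constant is of order $\min(1/n,\lambda_l^{-2})$, and moreover the map $x\mapsto$ posterior is a contraction with a gain. Summing the truncated series up to $L_n$ gives $\tfrac1n \sum_{l\le L_n} l^{2s/d}\simeq \tfrac1n L_n^{1+2s/d} = n^{-1+(d+2s)/(2\beta+d)} = n^{-2(\beta-s)/(2\beta+d)}$, matching $\eps_n^2$; the tail $l>L_n$ must be shown to contribute at the same order or smaller, which is where the decay of $C_{n,l}$ is essential.

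For the second (deterministic) series I would use the Laplace method, as indicated, to pin down the asymptotics of $\Wcal_2^2(\pi_{\beta,n,l}(\cdot|f_{0,l}),\delta_{f_{0,l}})$ as a function of $(l,n)$. Here one distinguishes cases according to whether $f_{0,l}\neq 0$ (the Gaussian factor peaks at an interior point, the Laplace factor is smooth there, and a standard Laplace expansion gives posterior variance $\simeq 1/n$ plus a bias of order $\lambda_l/n$ from the drift induced by the penalty) or the regime where $n f_{0,l}^2$ is not large and the kink at $0$ matters (yielding variance $\lesssim \min(1/n,\lambda_l^{-2})$ and the thresholding-type shrinkage toward $0$). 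The contribution is then bounded using $f_0\in B^\beta_1$, i.e.~$\sum_l l^{\beta/d-1/2}|f_{0,l}|<\infty$; one splits the sum at $L_n$, bounds the low-frequency part by $\tfrac1n\sum_{l\le L_n}l^{2s/d}$ as before, and bounds the high-frequency part by $\sum_{l>L_n} l^{2s/d}\min(f_{0,l}^2,\lambda_l^{-2})$, where the Besov summability together with $\lambda_l\gtrsim\sqrt n$ for $l>L_n$ and the constraint $s<\beta-d/2$ forces the same rate $n^{-2(\beta-s)/(2\beta+d)}$. Combining the two series bounds yields $\eps_n \lesssim n^{-(\beta-s)/(2\beta+d)}$, and Lemma 2.3 of \cite{dolera2024strong} (as recalled around \eqref{Eq:Eps}) upgrades this to the stated posterior contraction statement with any $M_n\to\infty$.

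The main obstacle I anticipate is the sharp control of the Lipschitz/Poincaré constants $C_{n,l}$ in the high-frequency regime $l>L_n$: a uniform $O(1)$ bound is insufficient, and one needs the genuine decay coming from the strong convexity of the penalty relative to the quadratic localisation term, handled uniformly in the data $X^{(n)}_l$ (which is itself random and, for large $l$, of order $1/\sqrt n$). A secondary delicate point is making the Laplace-method expansions for the deterministic series uniform across the full range of $(l,n)$, in particular straddling the non-smooth point $\theta=0$ of the Laplace density when $f_{0,l}$ is small, so that the error terms can be summed against the Besov weights without losing the rate.
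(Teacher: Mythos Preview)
Your proposal is correct and follows the same approach as the paper: Poincar\'e-constant estimates for the log-concave marginal posteriors (giving $C_{n,l}\lesssim 1$ for $l\le L_n$ and $C_{n,l}\lesssim n/\lambda_l^{2}$ for $l>L_n$, exactly as you anticipate) handle the stochastic series, and Laplace-method asymptotics handle the deterministic one, with the threshold $L_n\simeq n^{d/(2\beta+d)}$. The one refinement you flag as a delicate point but do not resolve is that the Laplace expansion requires $n/\lambda_l^{2}\to\infty$ and so is not uniform up to $l=L_n$; the paper therefore inserts an intermediate band $J_n<l\le L_n$ with $J_n\simeq (n/M_n)^{d/(2\beta+d)}$ in the deterministic series, treated via an explicit $\erfc$ representation of the normalising constant, while for $l>L_n$ the crude bound $\Wcal_2^2(\pi_{\beta,n,l}(\cdot|f_{0,l}),\delta_{f_{0,l}})\lesssim \lambda_l^{-2}$ already suffices (so your $\min(f_{0,l}^2,\lambda_l^{-2})$ and the Besov summability are not actually needed there).
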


\begin{proof}
From the general argument outlined in Section \ref{Sec:WassDyn}, it is sufficient to individually upper bound the two series appearing in the right hand side of \eqref{Eq:TwoSeries}, with marginal posterior distributions $\pi_{n,l}(\cdot|X^{(n)}_l) = \pi_{\beta,n,l}(\cdot|X^{(n)}_l)$ arising as in \eqref{Eq:LaplPost} with $\alpha=\beta$, and for wavelet coefficients $f_{0,l} = \langle f_0,\psi_l\rangle_{L^2}$. The claim then follows from the joint application of Lemmas \ref{Lem:StochPart} and \ref{Lem:DetPart}.
\end{proof}

	For all $\beta>d/2$ and $s<\beta$, the rate $n^{-(\beta-s)/(2\beta + d)}$ is known to be the minimax-optimal rate of estimation in $H^s$-metric for $\beta$-Sobolev-regular ground truths, cf.~\cite{fischer2020sobolev}, and this conclusions may be extended to the case $f_0\in B^\beta_1$ as well as long as $s<\beta-d/2$; see e.g.~\cite[Chapter 10]{HKPT98} for the case $d=1$ and  $s=0$ (i.e.~the $L^2$-minimax rates). Since the rates of posterior contraction are known to be lower bounded by the minimax ones (through the universal existence of an estimator that converges at the same rate as the posterior, cf.~Chapters 8.1 and 8.4 in \cite{GvdV17}), the rate obtained in Theorem \ref{Theo:Main} is sharp.

	Theorem \ref{Theo:Main} improves upon and expands the existing literature in two directions: firstly, in the presence of spatially inhomogeneous ground truths $f_0\in B^\beta_1$, the $L^2$-rate attained in the white noise model in \cite[Theorem 5.8]{ADH21} with a $\beta$-regular Besov-Laplace prior is of the form $n^{-(\beta-c)/(2\beta - 2c +d)}$ for some $c>0$, and thus is `polynomially slower' than the minimax rate. As mentioned in Section \ref{Sec:Literature}, the sub-optimality originates from the complexity of the `sieves sets' associated to general Besov priors, which complicates the derivation of sharp metric entropy bounds for the verification of the  testing condition of \cite{GGvdV00}. This difficulty was overcome in \cite{ADH21} and in all the subsequent frequentist asymptotic investigations of Besov priors \cite{GR22,giordano23besov,agapiou2024laplace,agapiou2024adaptive} through a combination of under-smoothing and rescaling, whereby in a variety of statistical models, for ground truths $f_0\in B^\beta_1$, optimal posterior contraction rates were obtained employing $(\beta-d)$-regular Besov priors, rescaled by a sequence that diverges as a power of the sample size. Setting $s=0$ in Theorem \ref{Theo:Main} shows that the smoothness-matching $\beta$-regular Laplace prior does in fact achieve the $L^2$-minimax rate $n^{-\beta/(2\beta+d)}$, in the white noise model. The proof relies on the Wasserstein dynamics approach outlined in Section \ref{Sec:WassDyn}, which is carried out via a careful study of the asymptotics behaviour of the individual terms appearing in the two series in the right hand side of \eqref{Eq:TwoSeries}. Crucially, this circumvents the need of employing the insufficiently sharp complexity bounds available for the sieves of the $\beta$-regular Laplace prior, which, as discussed above, result in a sub-optimal posterior contraction rate through the testing-based proof pursued in \cite{ADH21}.

	Secondly, Theorem \ref{Theo:Main} also expands the existing literature on Besov priors in terms of the metric in which the posterior contraction rates are formulated, allowing for Sobolev norms $\|\cdot\|_{H^s}$ of all orders $s<\beta-d/2$ (a natural restriction in view of the compact embedding $B^\beta_1\subset H^s$ holding for all $\beta>s+d/2$, e.g.~\cite[p.33]{LSS09}). On the contrary, all the other available results restrict to zeroth-order integral metrics of $L^p$-type, for some $p\in[1,2]$, see \cite{GR22,giordano23besov,agapiou2024laplace,agapiou2024adaptive}. In Section \ref{Sec:DerivEstim}, we will leverage the strong posterior contraction rates derived in Theorem \ref{Theo:Main} to show that smoothness-matching Laplace priors also simultaneously solve, with minimax-optimal posterior contraction rates, the problem of inferring the derivatives of the ground truth from observations from the white noise model \eqref{Eq:WhiteNoise}.

\begin{remark}[The case of $B^1_1$]
The space $B^1_1$ (the so-called `bump algebra') is closely related to the space of bounded variation functions, e.g.~\cite[Section 1]{DJ98}, and thus is of great relevance to a number of applications such as imagining and signal processing. Note that the case $f_0\in B^1_1$ is allowed in Theorem \ref{Theo:Main}, if $d=1$. On the contrary, all the existing results on minimax rates for Besov priors (which require undersmoothing and rescaling) do not strictly include this case, only covering regularities $\beta$ arbitrarily close to $1$, when $d=1$, see \cite{ADH21,giordano23besov,agapiou2024adaptive}.
\end{remark}



%
%
%

\subsection{Applications to derivative estimation}
\label{Sec:DerivEstim}

Given observations $X^{(n)}$ from the white noise model, we now turn to the problem of estimating the derivatives of the unknown regression function $f_0$. For any multi-index $\gamma = (\gamma_1,\dots,\gamma_d)\in \N_0^d$, let $|\gamma| := \sum_{j=1}^d \gamma_j$, and denote by
$$
	D^\gamma := \frac{\partial^{|\gamma|}}{\partial_{t_1}^{\gamma_1}\dots \partial_{t_d}^{\gamma_d}}
$$
the mixed partial weak differential operator of order $|\gamma|$. $D^\gamma$ is well-known to define a continuous linear operator between the Besov spaces $B^{s}_{pq}$ and $B^{s-|\gamma|}_{pq}$ for any $s$, $p$ and $q$ (e.g.~\cite[Chapter 4.3]{GN16}).

	For $\beta>|\gamma|$, assume that $f_0\in B^\beta_1$, and let $\Pi_\beta(\cdot|X^{(n)})$ be the posterior distribution considered in Section \ref{Sec:StrongPCR}, arising from endowing $f$ with a smoothness-matching $\beta$-regular Laplace prior. The so-called `plug-in' approach to estimating the mixed partial derivative $D^\gamma f_0$ consists in employing the posterior distribution $\Pi^{D^\gamma}_\beta(\cdot|X^{(n)})$ of $D^\gamma f|X^{(n)}$, obtained as the push-forward of $\Pi_\beta(\cdot|X^{(n)})$ under the differential operator $D^\gamma$. Compared to direct prior modelling of $D^\gamma f_0$, this is practically appealing, as it allows to employ a single statistical procedure (in particular, to specify a single prior distribution) regardless of wether the inferential target is the regression function or its derivatives. In view of the random series structure of $\Pi_\beta(\cdot|X^{(n)})$ (cf.~\eqref{Eq:LaplPost}) and the linearity of $D^\gamma$, the plug-in posterior $\Pi^{D^\gamma}_\beta(\cdot|X^{(n)})$ may also be seen to correspond to the law of a random series as in \eqref{Eq:ProdPrior}, with independent marginal posterior distributions $\pi_{\beta,n,l}(\cdot|X^{(n)}_l)$ as in \eqref{Eq:LaplPost}, and where the wavelet basis $\{\psi_l\}_{l\ge1}$ for the definition of $\Pi_\beta(\cdot|X^{(n)})$ is replaced by the collection $\{D^\gamma \psi_l\}_{l\ge1}$.
	


	The following corollary, which is a direct consequence of Theorem \ref{Theo:Main}, shows that the plug-in posteriors contract around the partial derivatives of $f_0$ at optimal $L^2$-rate. Such `plug-in property' was recently obtained for Gaussian process priors in nonparametric regression by Liu and Li \cite{liu2022optimal}, assuming Sobolev-regular ground truths. Here, it is established for Besov-Laplace priors and for regression functions in the Besov scale, representing, to our kwoledge, the first optimality result for Bayesian inference on (possibly) spatially-inhomogeneous derivatives.

\begin{corollary}\label{Cor:DerivEstim}
Assume observations $X^{(n)}\sim P^{(n)}_{f_0}$ from \eqref{Eq:WhiteNoise} for some fixed $f_0\in B^\beta_1$, some $\beta> 1+d/2$. For any $\gamma\in\N_0^d$ such that $|\gamma| < \beta - d/2$, let $\Pi^{D^\gamma}_\beta(\cdot|X^{(n)})$ be the push-forward under the operator $D^\gamma$ of the posterior distribution $\Pi_\beta(\cdot|X^{(n)})$, arising as in \eqref{Eq:LaplPost} with $\alpha=\beta$. Then, 
\[
	E_{f_0}^{(n)}\Pi^{D^\gamma}_\beta\left(g : \|g - D^\gamma f_0\|_{L^2}>
	M_n n^{-(\beta - |\gamma|)/(2\beta + d)}
	\Big|X^{(n)}\right)\to 0
\]
as $n\to\infty$ for all positive real sequences $M_n\to\infty$. 
\end{corollary}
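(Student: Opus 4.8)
The plan is to derive Corollary~\ref{Cor:DerivEstim} as a straightforward consequence of Theorem~\ref{Theo:Main}, exploiting the fact that the push-forward operation under $D^\gamma$ intertwines nicely with the wavelet expansions underlying the posterior. First I would observe, as noted in the paragraph preceding the statement, that since $\Pi_\beta(\cdot|X^{(n)})$ is the law of the random series $\sum_l f_l\psi_l$ with $f_l\ind\pi_{\beta,n,l}(\cdot|X^{(n)}_l)$, and since $D^\gamma$ is linear and continuous from $B^s_{pq}$ to $B^{s-|\gamma|}_{pq}$, the plug-in posterior $\Pi^{D^\gamma}_\beta(\cdot|X^{(n)})$ is the law of $\sum_l f_l D^\gamma\psi_l$ with the \emph{same} marginal laws on the coefficients. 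Next I would use the key norm-equivalence that for $g=\sum_l g_l\psi_l \in H^{|\gamma|}$ one has $\|D^\gamma g\|_{L^2} \lesssim \|g\|_{H^{|\gamma|}}$ (indeed $D^\gamma$ maps $H^{|\gamma|}=B^{|\gamma|}_{22}$ continuously into $L^2=B^0_{22}$, cf.~\cite[Chapter 4.3]{GN16}). Applying this pointwise to a posterior draw $f$ and to $f_0$, linearity gives $\|D^\gamma f - D^\gamma f_0\|_{L^2} = \|D^\gamma(f-f_0)\|_{L^2} \lesssim \|f-f_0\|_{H^{|\gamma|}}$.

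The conclusion then follows by a change of variables in the push-forward measure: writing $g = D^\gamma f$, the event $\{g : \|g - D^\gamma f_0\|_{L^2} > M_n n^{-(\beta-|\gamma|)/(2\beta+d)}\}$ has the same $\Pi^{D^\gamma}_\beta(\cdot|X^{(n)})$-probability as the event $\{f : \|D^\gamma f - D^\gamma f_0\|_{L^2} > M_n n^{-(\beta-|\gamma|)/(2\beta+d)}\}$ has $\Pi_\beta(\cdot|X^{(n)})$-probability, by definition of the image measure. By the Lipschitz bound above, the latter event is contained in $\{f : \|f-f_0\|_{H^{|\gamma|}} > c\, M_n n^{-(\beta-|\gamma|)/(2\beta+d)}\}$ for a suitable constant $c>0$. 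Since $|\gamma| < \beta - d/2$ by assumption, I may invoke Theorem~\ref{Theo:Main} with $s = |\gamma|$, which gives that the $P^{(n)}_{f_0}$-expectation of this last posterior probability tends to $0$ for every diverging sequence; replacing $M_n$ by $c^{-1}M_n$ (still diverging) absorbs the constant, completing the argument.

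The main obstacle, such as it is, is essentially bookkeeping rather than anything deep: one must be careful that the continuity estimate $\|D^\gamma g\|_{L^2}\lesssim\|g\|_{H^{|\gamma|}}$ holds uniformly over the relevant class (it does, being a fixed bounded linear operator on the fixed space $H^{|\gamma|}$, independent of $n$ and of the draw), and that the push-forward measure is well-defined, i.e.~that the posterior draws indeed lie in $H^{|\gamma|}$ almost surely so that $D^\gamma f \in L^2$ makes sense. The latter is guaranteed because $\Pi_\beta$ (and hence the posterior, which is absolutely continuous with respect to it on each coordinate) is supported on $B^s_p$ for every $s<\beta$ and in particular on $H^{|\gamma|}$, since $|\gamma| < \beta - d/2 < \beta$. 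With these points in place the corollary is immediate, and no new asymptotic analysis beyond Theorem~\ref{Theo:Main} is required.
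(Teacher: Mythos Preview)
Your proposal is correct and follows essentially the same route as the paper's own proof: rewrite the push-forward probability as a $\Pi_\beta(\cdot|X^{(n)})$-probability via the definition of the image measure, use the continuity bound $\|D^\gamma h\|_{L^2}\lesssim\|h\|_{H^{|\gamma|}}$ to pass to an $H^{|\gamma|}$-ball, and then invoke Theorem~\ref{Theo:Main} with $s=|\gamma|$. The only differences are cosmetic---the paper justifies the continuity bound via the explicit norm equivalence $\|h\|_{H^{|\gamma|}}\simeq\sum_{|\eta|\le|\gamma|}\|D^\eta h\|_{L^2}$ rather than citing the mapping property of $D^\gamma$ on Besov spaces, and it does not spell out the well-definedness checks you include in your last paragraph.
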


\begin{proof}
Applying Theorem \ref{Theo:Main} with $s=|\gamma|\in[0,\beta-d/2)$ yields the posterior contraction rate $n^{-(\beta - |\gamma|)/(2\beta + d)}$ in $H^{|\gamma|}$-metric. In view of the norm equivalence (e.g.~\cite[p.~371]{GN16})
$$
	\|h\|_{H^{|\gamma|}}
	\simeq \sum_{\eta\in\N_0^d : |\eta|\le|\gamma|}\|D^\eta h\|_{L^2},
	\qquad h\in H^{|\gamma|},
$$
we have $\|D^\gamma h\|_{L^2}\lesssim \|h\|_{H^{|\gamma|}}$ for all $h\in H^{|\gamma|}$, and hence for some constant $c>0$ and all positive real sequences $M_n\to\infty$
\begin{align*}
	E_{f_0}^{(n)}&
	\Pi^{D^\gamma}_\beta\left(g : \|g - D^\gamma f_0\|_{L^2}> M_n 	n^{-(\beta - |\gamma|)/(2\beta + d)}
	\Big|X^{(n)}\right)\\
	&=
	E_{f_0}^{(n)}\Pi_\beta\left(f : \|D^\gamma f - D^\gamma f_0\|_{L^2}> M_n 	n^{-(\beta - |\gamma|)/(2\beta + d)}
	\Big|X^{(n)}\right)\\
	&\le E_{f_0}^{(n)}\Pi_\beta\left(f : \|f -  f_0\|_{H^{|\gamma|}}> c M_n 	n^{-(\beta - |\gamma|)/(2\beta + d)}
	\Big|X^{(n)}\right)\to 0.
\end{align*}
\end{proof}

%
%
%
%
%

\section{Discussion}
\label{Sec:Discussion}

	In this article, we have investigated the recovery of (possibly) spatially inhomogeneous grounds truths and their derivatives in the white noise model using Besov-Laplace priors. We have shown that smoothness-matching priors attains minimax optimal posterior contraction rates over the Besov spaces $B^\beta_1$, $\beta > d/2$, not requiring artificial undersmoothing and rescaling as in the previously available literature. The proof is based on the novel `Wasserstein dynamics' approach to posterior contraction rates proposed in \cite{dolera2024strong}, which allows to overcome the technical challenges that arise in the pursuit of the testing-based strategy of \cite{GGvdV00} for Besov-Laplace priors of matching regularity. The employed proof technique also allow to flexibly handle (stronger) Sobolev metrics, which we exploit to show that the `plug-in' posteriors under differential operators optimally estimate the derivatives of the ground truth.

%
%
%

\subsection{Adaptation}

The posterior contraction rates obtained in Theorem \ref{Theo:Main} and Corollary \ref{Cor:DerivEstim} are non-adaptive, since they are obtained with smoothness-matching prior distributions, whose specification requires knowledge of the regularity of the ground truth. Hierarchical procedures based on undersmoothing and rescaled Besov priors have been recently shown to achieve adaptation over the Besov scale in \cite{agapiou2024adaptive}, which requires the randomisation of two hyperparameters, as well as \cite{giordano23besov}, where an `adaptive rescaling' is employed.

	The present investigation suggests that simpler hierarchical Besov priors, constructed along the lines of those in \cite{vdVvZ09} or \cite{lember2007universal}, could also achieve adaptive posterior contraction rates, as well as an `adaptive plug-in' property for the estimation of the derivatives (cf.~\cite{liu2022optimal}, where the latter property is obtained for ground truths in the Sobolev scale with an empirical Bayes procedure based on Gaussian process priors). However, since hierarchical priors generally fall outside the product framework outlined in Section \ref{Sec:WassDyn}, the application of the Wasserstein dynamics approach to posterior contraction rates in such context would result in a substantially different mathematical analysis that the one developed here. At the present stage, we thus leave this question for future investigations.

%
%
%

\subsection{Beyond Besov-Laplace priors and the white noise model}
	
Another interesting research direction concerns the extension of the present work to other classes of prior distributions and different statistical models, in particular to settings where the verification of the testing condition of \cite{GGvdV00} for posterior contraction may prove to be challenging. For such cases, our results, alongside the investigations of \cite{dolera2024strong}, show that the Wasserstein dynamics approach may provide a useful alternative. For a related work with a similar point of view, we refer to the recent article by Agapiou and Castillo \cite{agapiou2024heavy}, where adaptive posterior contraction rates in the white noise model are derived (with different techniques) using product priors with heavy tails (heavier than Laplace), not necessitating any hyperparameter tuning.

	In \cite{dolera2024strong}, the application of the Wasserstein dynamics approach to posterior contraction rates was examined in several prototypical settings, including parametric statistical models, nonparametric regression and density estimation. However, while \cite{dolera2024strong} laid down the foundational work for these applications, the optimality of their results (outside of those in the parametric framework) remains unclear. These issues are the subject of ongoing work, but they are outside the scope of the present article.

%
%
%
%
%

\section{Proofs}
\label{Sec:Proofs}

In this section we will state and prove the two main technical tools, Lemmas \ref{Lem:StochPart} and \ref{Lem:DetPart}, for the proof of Theorem \ref{Theo:Main}, providing upper bounds for the two series appearing in the right hand side of \eqref{Eq:TwoSeries} for smoothness-matching Besov-Laplace priors. To unburden the notation, we will denote  throughout, for any fixed $\beta>d/2$, $\gamma_l \equiv \gamma_{l,\beta}:=\ell^{1/2+\beta/d}$, and rewrite the marginal posteriors from \eqref{Eq:LaplPost} as
\beq\label{Eq:NewPost}
	\pi_{\beta,n,l}(\theta|x) = \frac{e^{-\frac{n}{2}(\theta - x)^2-\gamma_l|\theta|}}
	{\int_\R e^{-\frac{n}{2}(\vartheta - x)^2-\gamma_l|\vartheta|}d\vartheta},
	\qquad \theta,x\in\R.
\eeq

%
%
%

\subsection{Stochastic term}

Given observations $X^{(n)}\sim P^{(n)}_{f_0}$ for some $f_0\in B^\beta_1$, and the posterior distribution $\Pi_\beta(\cdot|X^{(n)})$ arising from a $\beta$-regular Besov-Laplace prior, the first series in \eqref{Eq:TwoSeries} reads, for any $s<\beta-d/2$,
\beq\label{Eq:NewSeries}
	\sum_{l=1}^\infty l^{2s/d}E^{(n)}_{f_{0}}
	\left[ \Wcal_2^2\left(\pi_{\beta,n,l}(\cdot| X^{(n)}_l), \pi_{\beta,n,l}(\cdot| f_{0,l})\right) \right],
\eeq
with $\pi_{\beta,n,l}(\cdot| X^{(n)}_l)$ and $\pi_{\beta,n,l}(\cdot| f_{0,l})$ defined as in \eqref{Eq:NewPost} with $x=X^{(n)}_l$ and $x=f_{0,l}=\langle f_0,\psi_l\rangle_{L^2}$, respectively. As outlined in Section \ref{Sec:WassDyn}, we will bound the individual terms in the above series by deriving a $\Wcal_2$-Lipschitz property, cf.~\eqref{Eq:LipProp}, for the Markov kernels $\{\pi_{\beta,n,l}(\cdot|x),\ x\in\R\}$ with respect to their second argument. This idea, which is akin to the notion of `continuity of the posterior distribution with respect to the data' from the inverse problem literature \cite{S10} (where typically the Hellinger distance is employed), was recently explored by Dolera and Mainini \cite{dolera2020uniform,dolera2023lipschitz} and linked to the concepts of functional inequalities for probability measures and weighted Poincaré constants.

	The following lemma, which is of independent interest, shows that the Lipschitz property in $\Wcal_2$-distance \eqref{Eq:LipProp} holds for the marginal posterior distributions arising from Besov-Laplace priors, and provide precise estimates for the Lipschitz constants with explicit dependence on $l,n$ and $\beta$. Its proof makes use of the main result from \cite{dolera2023lipschitz}, and is based upon separating in the series \eqref{Eq:NewSeries} the `low' frequencies $l\le L_n$ from the `high' ones $l>L_n$, in terms of a threshold $L_n\in\N$ such that $\gamma_l = l^{1/2+\beta/d}\lesssim \sqrt{n}$ if $l\le L_n$, and $\gamma_l\gtrsim \sqrt n$ for all $l>L_n$. In the first regime, the posterior densities from \eqref{Eq:NewPost} have the form of `small' log-concave perturbations of a Gaussian measure with variance $ 1/n$, cf.~\eqref{Eq:Pert} below, from which they inherit an estimate of the associated Poincaré constants \cite{cattiaux2022functional,bardet18}. For the high frequencies, the posterior densities instead may be be interpreted as `small' Gaussian perturbations of scaled Laplace distributions, cf.~\eqref{Eq:Pert2} below, which also allows some explicit manipulations to obtain the required estimates of the Poincaré constants using results from \cite{cattiaux2022functional}.

\begin{lemma}\label{Lem:LipProp}
For any $\beta>d/2$, $n,l\in\N$, let $\{\pi_{\beta,n,l}(\cdot|x)\}$ be the Markov kernel defined in \eqref{Eq:NewPost}. Let $L_n\in \N$ be the smallest integer number satisfying $L_n^{(2\beta+d)/d}\ge 2n$. Then, for all $l\le L_n$ and all $x,y\in\R$, we have
\[
	\Wcal_2(\pi_{\beta,n,l}(\cdot|x),\pi_{\beta,n,l}(\cdot|y))
	\le 4e^{16\sqrt{2/\pi}}|x - y|.
\]
Further, for all $l>L_n$ and all $x,y\in\R$, we have 
\[
	\Wcal_2(\pi_{\beta,n,l}(\cdot|x),\pi_{\beta,n,l}(\cdot|y))
	\le 8\frac{n}{\gamma_l^2}|x-y|.
\]
\end{lemma}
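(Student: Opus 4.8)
The plan is to establish the $\Wcal_2$-Lipschitz property by invoking the main result of Dolera and Mainini \cite{dolera2023lipschitz}, which bounds the Lipschitz constant of a family of Gibbs-type posteriors in terms of a (weighted) Poincaré constant of the posterior measure together with a sensitivity term capturing how the log-density varies with the data. Concretely, for the one-parameter family $\pi_{\beta,n,l}(\cdot|x)$ in \eqref{Eq:NewPost}, differentiating $\log\pi_{\beta,n,l}(\theta|x)$ in $x$ produces the score $n(\theta-x)$, so the machinery of \cite{dolera2023lipschitz} yields a bound of the shape $\Wcal_2(\pi_{\beta,n,l}(\cdot|x),\pi_{\beta,n,l}(\cdot|y)) \le n\sqrt{C_P(\pi_{\beta,n,l})}\cdot|x-y|$ (up to universal constants), where $C_P(\pi_{\beta,n,l})$ is the Poincaré constant of the posterior. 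Everything then reduces to estimating $C_P$ uniformly over the relevant range of $l$, and the two regimes $l\le L_n$ and $l>L_n$ are handled by two different perturbation arguments.

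For the low frequencies $l\le L_n$, by the choice of $L_n$ we have $\gamma_l = l^{1/2+\beta/d}\le L_n^{1/2+\beta/d} = L_n^{(2\beta+d)/(2d)}\le \sqrt{2n}$. I would then write the posterior density as $\pi_{\beta,n,l}(\theta|x)\propto e^{-\frac{n}{2}(\theta-x)^2}\cdot e^{-\gamma_l|\theta|}$, i.e.\ a log-concave perturbation of the $N(x,1/n)$ density by the factor $e^{-\gamma_l|\theta|}$. Since $\gamma_l|\theta|$ has Lipschitz constant $\gamma_l\le\sqrt{2n}$, one controls the oscillation of the perturbing potential on the scale $1/\sqrt n$ of the Gaussian: the bounded-perturbation (Holley–Stroock type) or the sharper Bakry–Émery/Cattiaux–Guillin estimates of \cite{cattiaux2022functional,bardet18} give a Poincaré constant of order $1/n$ times a universal constant, where the factor $e^{16\sqrt{2/\pi}}$ presumably arises from tracking the exponential of (a constant multiple of) the oscillation $\gamma_l\cdot(\text{Gaussian spread})\lesssim \sqrt{2n}\cdot\sqrt{1/n}\cdot\text{const}$. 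Plugging $C_P\lesssim 1/n$ into $n\sqrt{C_P}$ gives the claimed $O(1)$ Lipschitz constant $4e^{16\sqrt{2/\pi}}$.

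For the high frequencies $l>L_n$, by construction $\gamma_l>\sqrt{2n}$, so now the roles reverse: I would write $\pi_{\beta,n,l}(\theta|x)\propto e^{-\gamma_l|\theta|}\cdot e^{-\frac{n}{2}(\theta-x)^2}$ as a Gaussian perturbation of the scaled Laplace density with parameter $\gamma_l$, whose Poincaré constant is of order $\gamma_l^{-2}$ (the Laplace$(\gamma_l)$ law has variance $2/\gamma_l^2$ and a well-known Poincaré constant $\simeq \gamma_l^{-2}$). One must check the perturbation is ``small'' in the appropriate sense; here the relevant quantity is that the Gaussian factor $e^{-\frac{n}{2}(\theta-x)^2}$ modifies the log-density by something whose effect on the Poincaré constant is controlled via the Cattiaux–Guillin criteria in \cite{cattiaux2022functional}, keeping $C_P(\pi_{\beta,n,l})\lesssim \gamma_l^{-2}$, possibly with a bounded multiplicative loss. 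The Lipschitz bound then becomes $n\sqrt{C_P}\lesssim n/\gamma_l$, which is not yet the stated $8n/\gamma_l^2$; I expect that a more careful version of the Dolera–Mainini bound exploits that in this regime the posterior is concentrated on a scale $\lesssim 1/\gamma_l$ (or that the effective score is $O(\gamma_l)$ rather than $O(n)$ on the bulk), squeezing out an extra factor $\gamma_l^{-1}$, or alternatively one obtains a direct estimate $\Wcal_2 \le \sqrt{C_P}\cdot (\text{oscillation of score}) \le \gamma_l^{-1}\cdot (n/\gamma_l)|x-y|$. The main obstacle will be precisely this bookkeeping in the high-frequency regime: verifying that \cite{dolera2023lipschitz} applies with the correct sensitivity term and combining it with the Poincaré estimate of \cite{cattiaux2022functional} so that the two factors of $\gamma_l^{-1}$ both appear, and making sure the constant $8$ (and the threshold $\gamma_l>\sqrt{2n}$, needed to absorb the Gaussian perturbation into a bounded constant) comes out as stated. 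The low-frequency case, by contrast, should be routine once the perturbation estimate of \cite{cattiaux2022functional} is quoted in the right form.
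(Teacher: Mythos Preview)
Your overall strategy is exactly the paper's: apply the Dolera--Mainini Lipschitz bound from \cite{dolera2023lipschitz}, then estimate the Poincar\'e constant of $\pi_{\beta,n,l}(\cdot|x)$ in two regimes via the perturbation results of \cite{cattiaux2022functional,bardet18}. The decomposition into low frequencies (Gaussian perturbed by a Lipschitz potential) and high frequencies (Laplace perturbed by a Gaussian) is also precisely how the paper proceeds.

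The genuine gap is that you have misquoted the Dolera--Mainini bound. Corollary 2.3 of \cite{dolera2023lipschitz}, applied with $\Phi(x,\theta)=-\tfrac{n}{2}(\theta-x)^2$ so that $\partial_\theta\partial_x\Phi=n$, gives
\[
\Wcal_2\big(\pi_{\beta,n,l}(\cdot|x),\pi_{\beta,n,l}(\cdot|y)\big)\le n\,\sup_{z\in\R}C_P\big(\pi_{\beta,n,l}(\cdot|z)\big)\,|x-y|,
\]
where $C_P$ denotes the (standard) Poincar\'e constant satisfying $\mathrm{Var}_\mu(\phi)\le C_P(\mu)\int(\phi')^2\,d\mu$. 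Your version has $n\sqrt{C_P}$ in place of $nC_P$, and this single slip is responsible for both difficulties you flag. In the low-frequency case you write that $C_P\lesssim 1/n$ and then claim $n\sqrt{C_P}=O(1)$; but $n\sqrt{1/n}=\sqrt n$, so your own computation does not close. With the correct factor $nC_P$ one gets $n\cdot O(1/n)=O(1)$, which is the stated constant $4e^{16\sqrt{2/\pi}}$ (the paper obtains $C_P\le (4/n)e^{16\sqrt{2/\pi}}$ from Theorem~1.3 of \cite{cattiaux2022functional} with $\rho=n/2$ and Lipschitz constant $L=\gamma_l\le 2\sqrt{n}$). In the high-frequency case the ``missing'' factor of $\gamma_l^{-1}$ is a phantom: with the correct bound, $C_P\le 8/\gamma_l^2$ (obtained in the paper by scaling the standard Laplace constant $4$ and then applying Example~2 of \cite{cattiaux2022functional} with $\epsilon=1/2$, which is admissible precisely because $\gamma_l^2\ge 2n$) gives directly $nC_P\le 8n/\gamma_l^2$. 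No additional ``concentration on scale $1/\gamma_l$'' argument is needed.

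Once you replace $n\sqrt{C_P}$ by $nC_P$, your sketch is the paper's proof.
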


\begin{proof}
We apply Corollary 2.3 in \cite{dolera2023lipschitz} (with the choices, in their notation, of $f(x|\theta) := e^{-\frac{n}{2}(\theta - x)^2}$, for which $\Phi(x,\theta)= -n(\theta - x)^2/2$, $\partial_x \Phi(x,\theta) = n(\theta - x)$ and  $\partial_\theta\partial_x \Phi(x,\theta) = n$) to obtain that for all $n,l\in\N$, $x,y\in\R$,
\beq\label{Eq:WassLip}
	\Wcal_2\left(\pi_{\beta,n,l}(\cdot| x), \pi_{\beta,n,l}(\cdot| y)\right)
	\le n\sup_{z\in\R}\{\Ccal[\pi_{\beta,n,l}(\cdot|z))]^2\}| x - y|,
\eeq
where $\Ccal[\pi_{\beta,n,l}(\cdot|x))]$ is the Poincaré-constant associated to the probability measure $\pi_{\beta,n,l}(\cdot|x)$, $x\in\R$; see e.g.~eq.~(2.2) in \cite{dolera2023lipschitz} for the definition. We proceed upper bounding the constants $\Ccal[\pi_{\beta,n,l}(\cdot|x))]^2$, separating the cases $l\le L_n$ and $l>L_n$.

	Starting with the low frequencies $l\le L_n$, for which have $\gamma_l^2/n\le4$, we rewrite the posterior densities from \eqref{Eq:NewPost} as
\beq\label{Eq:Pert}
	\pi_{\beta,n,l}(\theta|x)
	\equiv \mu_F(\theta)
	:=\frac{e^{-F(\theta)}\mu(\theta)}
	{\int_\R e^{-F(\vartheta)}\mu(\vartheta)d\vartheta}
	\qquad \theta,x\in\R.
\eeq	
where 
\[
	F(\theta) := \gamma_l |\theta|, \qquad \theta\in\R,
\]
and
\[
	\mu(\theta) := \frac{\sqrt n}{\sqrt{2\pi} }e^{-\frac{n}{2}(\theta - x)^2} 
	\propto e^{-V(\theta)}, 
	\qquad V(\theta):=\frac{n}{2}(\theta - x)^2,
	\qquad \theta\in\R.
\]
Noting that $V''(\theta) = n/2$ (constant in $x$) and that $F$ is a Lipschitz continuous function with Lipschitz constant equal to $\gamma_l$, Theorem 1.3 in \cite{cattiaux2022functional} (with the choices, in their notation, $\rho: = n/2$, and $L:=\gamma_l$), see also Lemma 2.1 in \cite{bardet18}, then implies that for all $x\in\R$ ,
\[
	\Ccal[\pi_{\beta,n,l}(\cdot|x))]^2 \le \frac{2}{ n/2} e^{4\sqrt{2/\pi}\gamma_l/\rho}
	\le 4e^{16\sqrt{2/\pi}}\frac{1}{n}.
\]
Since the latter is independent of $x$, the first statement then follows from \eqref{Eq:WassLip}. Moving to the case $l>L_n$, we follow Example 2 in \cite{cattiaux2022functional}, now writing
\beq\label{Eq:Pert2}
	\pi_{\beta,n,l}(\theta|x)\propto e^{-V(\theta) - \rho (\theta - x)^2/2 }, 
	\qquad
	V(\theta) := \gamma_l|\theta|,
	 \qquad \rho := n,
	\qquad \theta\in\R.
\eeq
Note that $\rho$ above is independent of $x$. We then have $V'(\cdot) = \sign(\cdot)$ (in the weak sense) and 
\[
 	\theta V'(\theta) = x \gamma_l \sign(\theta) = \gamma_l |\theta| > - K - K'|\theta|^2
\]
for all $K,K'\ge 0$, all $\theta\in\R$. It follows that for all $x\in \R$,
\begin{equation}
\label{Eq:ShiftPoinc}
	\Ccal[\pi_{\beta,n,l}(\cdot|x))]^2 \le \frac{\Ccal[\pi_{l}(\cdot)]^2}{1-\epsilon},
\end{equation}
for all $\epsilon>0$ such that
\begin{equation}
\label{Eq:EpsCond}
	\frac{2(1-\epsilon)}{ \Ccal[\pi_{l}(\cdot)]^2 ( 1 + K) }\ge  \rho = n,
\end{equation}
cf.~Example 2 in \cite{cattiaux2022functional}. Above, $\pi_l(\theta) = \gamma_l e^{-\gamma_l|\theta|}/2$ is the marginal prior density, corresponding to a scaled Laplace distribution on $\R$. The Poincaré constant for the standard Laplace distribution, with density 
$\pi_1(\theta) = e^{-|\theta|}/2$, is well-known to satisfy $\Ccal[\pi_{1}(\cdot)]^2 = 4$ (cf.~Example 1 in \cite{cattiaux2022functional}), and we now argue that $\Ccal[\pi_{l}(\cdot)]^2 \le 4/\gamma_l^2$, namely that for any differentiable test function $\phi:\R\to\R$ (for which the following integrals are well defined),
\begin{equation}
\label{Eq:LaplPoinc}
	\int_\R (\phi(\theta) - \pi_l[\phi])^2\pi_l(\theta)d\theta \le \frac{4}{\gamma_l^2}
	\int_\R [\phi'(\theta)]^2\pi_l(\theta)d\theta,
	\qquad \pi_l[\phi] := \int_\R \phi(\vartheta)\pi_l(\vartheta)d\vartheta.
\end{equation}
Indeed, using (twice) the change of variable $t = \gamma_l\theta$, the left hand side equals
\begin{align*}
	&\int_\R \left(\phi(\theta) 
	- \int_\R \phi(\vartheta)\frac{\gamma_l}{2} e^{-\gamma_l|\vartheta|}d\vartheta
	\right)^2
	\frac{\gamma_l}{2} e^{-\gamma_l|\theta|}d\theta\\
	&\ =
	\int_\R \left(\phi(t/\gamma_l) - \int_\R \phi(\tau/\gamma_l)\frac{1}{2} e^{-|\tau|}d\tau
	\right)^2
	\frac{1}{2} e^{-|t|}dt
	=
	\int_\R \left(\phi(t/\gamma_l) - \pi_1[\phi(\cdot /\gamma_l)]\right)^2
	\pi_1(t)dt,
\end{align*}
which, using that $\Ccal[\pi_{1}(\cdot)]^2 = 4$, is upper bounded by
\begin{align*}
	4 \int_\R \left[\frac{d}{d \tau }\phi(\tau/\gamma_l)\Big|_{\tau=t}\right]^2\pi_1(t)dt
	&=\frac{4}{\gamma_l^2} \int_\R \left[\phi'(t/\gamma_l)\right]^2\frac{\gamma_l}{2}e^{-|t|}\frac{1}{\gamma_l}dt.
\end{align*}
With the substitution $\theta = t/\gamma_l$, the last integral is seen to be equal to 
\[
	\frac{4}{\gamma_l^2} \int_\R \left[\phi'(\theta)\right]^2\frac{\gamma_l}{2}e^{-\gamma_l|\theta|}d\theta
	=\frac{4}{\gamma_l^2} \int_\R \left[\phi'(\theta)\right]^2\pi_l(\theta)d\theta,
\]
concluding the derivation of \eqref{Eq:LaplPoinc}.  Thus, recalling that for all $l>L_n$ we have $\gamma_l^2 \ge 2n$, taking $\epsilon = 1/2$ (and $K=K'=0$) in \eqref{Eq:EpsCond} yields as required that
\[
	 \frac{2(1-\epsilon)}{ \Ccal[\pi_{l}(\cdot)]^2  } = \frac{1}{2}\gamma_l^2 \ge n,
\]
and, in view of \eqref{Eq:ShiftPoinc}, that
\[
	\Ccal[\pi_{\beta,n,l}(\cdot|x))]^2 \le 2\Ccal[\pi_{l}(\cdot)]^2 \le \frac{8}{\gamma_l^2}.
\]
The second claim then follows from \eqref{Eq:WassLip}, as the latter is independent of $x$. 

\end{proof}

	The next lemma leverages the Lipschitz properties derived in Lemma \ref{Lem:LipProp} to upper bound the series of expected Wasserstein distances in \eqref{Eq:NewSeries}

\begin{lemma}\label{Lem:StochPart}
Assume observations $X^{(n)}\sim P^{(n)}_{f_0}$ from the white noise model \eqref{Eq:WhiteNoise} for some fixed $f_0\in B^\beta_1$, some $\beta> d/2$. Let $\Pi_\beta(\cdot|X^{(n)})$ be the posterior distribution arising from a $\beta$-regular Besov-Laplace prior, with marginal posterior distributions $\pi_{\beta,n,l}(\cdot|X^{(n)})$, $l\in\N$, defined as in \eqref{Eq:LaplPost}  with $\alpha=\beta$. Then, for all $ s\in[0, \beta-d/2)$ and all sufficiently large $n$,
\[
	\sum_{l=1}^\infty l^{2s/d}E^{(n)}_{f_{0}}
	\left[ \Wcal_2^2\left(\pi_{\beta,n,l}(\cdot| X^{(n)}_l) \pi_{\beta,n,l}(\cdot| f_{0,l})\right) \right]
	\lesssim n^{-2(\beta-s)/(2\beta+d)}.
\]
\end{lemma}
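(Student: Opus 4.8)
The plan is to split the series at the threshold $L_n$ defined in Lemma \ref{Lem:LipProp} (so that $L_n \simeq n^{d/(2\beta+d)}$) and to apply, in each regime, the corresponding Lipschitz estimate from that lemma together with the elementary fact that under $P^{(n)}_{f_0}$ one has $X^{(n)}_l - f_{0,l} = n^{-1/2}W_l$ with $W_l \iid N(0,1)$, hence $E^{(n)}_{f_0}|X^{(n)}_l - f_{0,l}|^2 = 1/n$. For the low frequencies $l \le L_n$, the Lipschitz bound gives $\Wcal_2^2(\pi_{\beta,n,l}(\cdot|X^{(n)}_l),\pi_{\beta,n,l}(\cdot|f_{0,l})) \le C|X^{(n)}_l - f_{0,l}|^2$ for an absolute constant $C = 16 e^{32\sqrt{2/\pi}}$, so taking expectations the low-frequency contribution is bounded by
\[
	\frac{C}{n}\sum_{l=1}^{L_n} l^{2s/d} \lesssim \frac{1}{n} L_n^{2s/d + 1}
	\simeq \frac{1}{n}\, n^{(2s+d)/(2\beta+d)} = n^{-2(\beta-s)/(2\beta+d)},
\]
using $\sum_{l\le L_n} l^{2s/d} \simeq L_n^{2s/d+1}$ (valid since $2s/d > -1$) and the definition of $L_n$. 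This already produces exactly the target rate, so the low-frequency part is essentially tight and the remaining task is to show the high-frequency tail is no larger.

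For the high frequencies $l > L_n$, the second bound of Lemma \ref{Lem:LipProp} gives $\Wcal_2(\pi_{\beta,n,l}(\cdot|X^{(n)}_l),\pi_{\beta,n,l}(\cdot|f_{0,l})) \le 8 n \gamma_l^{-2}|X^{(n)}_l - f_{0,l}|$ with $\gamma_l = l^{1/2+\beta/d}$, so $\gamma_l^{-4} = l^{-2 - 4\beta/d}$ and, taking expectations, the high-frequency contribution is bounded by
\[
	64 n^2 \cdot \frac{1}{n}\sum_{l > L_n} l^{2s/d}\, l^{-2-4\beta/d}
	= 64 n \sum_{l > L_n} l^{2s/d - 2 - 4\beta/d}.
\]
The exponent $2s/d - 2 - 4\beta/d < -1$ always (since $s < \beta$ forces $2s/d - 4\beta/d < -2\beta/d < 0$, so the exponent is $< -2 < -1$), hence the tail sum is $\simeq L_n^{2s/d - 1 - 4\beta/d}$. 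Plugging in $L_n^{(2\beta+d)/d} \simeq n$, i.e. $L_n \simeq n^{d/(2\beta+d)}$, the high-frequency bound becomes
\[
	n \cdot n^{(2s - d - 4\beta)/(2\beta+d)} = n^{(2s - d - 4\beta + 2\beta + d)/(2\beta+d)}
	= n^{(2s - 2\beta)/(2\beta+d)} = n^{-2(\beta-s)/(2\beta+d)},
\]
which matches the low-frequency contribution. Summing the two pieces yields the claimed bound $\lesssim n^{-2(\beta-s)/(2\beta+d)}$.

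I expect the only genuine subtlety to be bookkeeping around the threshold: one must check that $L_n$ as defined really satisfies $L_n \simeq n^{d/(2\beta+d)}$ (immediate from $L_n^{(2\beta+d)/d}\ge 2n$ and minimality), that the geometric-series/integral comparisons for $\sum_{l\le L_n} l^{2s/d}$ and $\sum_{l>L_n} l^{2s/d-2-4\beta/d}$ are applied with the correct exponent signs (the constraint $s \in [0,\beta-d/2)$, in particular $s<\beta$, is what guarantees the tail exponent is $<-1$; actually $s \ge 0$ already suffices there, and even $s < \beta - d/2$ is not needed for \emph{this} lemma), and that the ``$\lesssim$'' absorbs the absolute constants $C$ and $64$ from Lemma \ref{Lem:LipProp}. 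None of these steps is deep; the real work was already done in establishing the two Lipschitz constants in Lemma \ref{Lem:LipProp}, and the present lemma is the routine summation step. One should also note ``for all sufficiently large $n$'' is used only to ensure $L_n \ge 1$ and that the asymptotic series comparisons are clean.
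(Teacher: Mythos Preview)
Your proposal is correct and follows essentially the same approach as the paper: split the series at the threshold $L_n$ from Lemma \ref{Lem:LipProp}, apply the two Lipschitz bounds together with $E^{(n)}_{f_0}|X^{(n)}_l - f_{0,l}|^2 = 1/n$, and evaluate the resulting partial sums via the standard asymptotics $\sum_{l\le L_n} l^{2s/d}\simeq L_n^{2s/d+1}$ and $\sum_{l>L_n} l^{2s/d-2-4\beta/d}\simeq L_n^{2s/d-1-4\beta/d}$. The paper does the first of these via an explicit Euler--Maclaurin expansion, but this is a cosmetic difference; your observation that the restriction $s<\beta-d/2$ is not actually used in this lemma is also correct.
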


\begin{proof}
Split the series under study at the threshold $L_n\in\N$ defined in the statement of Lemma \ref{Lem:LipProp}. Then, by the first statement of Lemma \ref{Lem:LipProp},
\begin{align*}
	\sum_{l=1}^{L_n}  l^{2s/d}E^{(n)}_{f_{0}}
	\left[ \Wcal_2^2\left(\pi_{\beta,n,l}(\cdot| X^{(n)}_l) \pi_{\beta,n,l}(\cdot| f_{0,l})\right) \right]
	&\lesssim 
	\sum_{l=1}^{L_n}   l^{2s/d}E^{(n)}_{f_{0}}
	[| X^{(n)}_l - f_{0,l}|^2 ]\\
	&\le \frac{1}{n}\sum_{l=1}^{L_n}   l^{2s/d},
\end{align*}
having used that $X^{(n)}_l\sim N(f_{0,l},1/n)$ under $P^{(n)}_{f_{0}}$, cf.~\eqref{Eq:GaussSeq}. Using the Euler-Maclaurin method, the last quantity equals, for $(B_k)_{k\ge1}$ the sequence of Bernoulli's numbers (satisfying $B_k\to0$ as $k\to\infty$),
\begin{align*}
	\frac{1}{ n} \Bigg\{ \int_1^{L_n}&l^{2s/d}dl 
	+ \sum_{k=1}^\infty \frac{B_k}{k!}\times\frac{d^{k-1}}{dl^{k-1}}l^{2s/d}\Big|^{L_n}_{1}
	\Bigg\}\\
	&=\frac{1}{ n}\Bigg\{\frac{L_n^{2s/d+1}-1}{2s/d+1}
	+\frac{L_n^{2s/d}-1}{2}
	+\sum_{k=2}^\infty \frac{B_k}{k!}\times\frac{d^{k-1}}{dl^{k-1}}l^{s/d}\Big|^{L_n}_{1}\Bigg\}\\
	&\lesssim  \frac{1}{n}L_n^{2s/d+1}
	\lesssim \frac{1}{n} n^{(2s+d)/(2\beta+d)}
	=n^{(2s + d - 2\beta-d)/(2\beta + d)} = n^{-2(\beta - s)/(2\beta + d)},
\end{align*}
the first inequality in the last line holding for all sufficiently large $n\in\N$ since $L_n\to\infty$ as $n\to\infty$. Next, by the second statement of Lemma \ref{Lem:LipProp},
\begin{align*}
	\sum_{l=L_n+1}^\infty & l^{2s/d}E^{(n)}_{f_{0}}
	\left[ \Wcal_2^2\left(\pi_{\beta,n,l}(\cdot| X^{(n)}_l) \pi_{\beta,n,l}(\cdot| f_{0,l})\right) \right]\\
	 &\lesssim n^2\sum_{l = L_n+1}^\infty \frac{l^{2s/d}}{\gamma_l^4}
	 E^{(n)}_{f_{0}}
	[| X^{(n)}_l - f_{0,l}|^2 ]\\
	 &=n\sum_{l = L_n+1}^\infty  l^{-(4\beta-2s+2d)/d}
         =n L_n^{-\frac{4\beta - 2s + d}{d}}
         \lesssim n n^{-\frac{4\beta - 2s + d}{2\beta + d}}
         = n^{-\frac{2(\beta-s)}{2\beta + d}}.
\end{align*}
Combined with the previous display, this completes the proof.
\end{proof}

%
%
%

\subsection{Deterministic term}

To conclude the proof of Theorem \ref{Theo:Main}, there remains to upper bound the second series in the right hand side of \eqref{Eq:TwoSeries}, which equals
\[
	\sum_{l=1}^\infty l^{2s/d}
	\Wcal_2^2\left( \pi_{\beta,n,l}(\cdot| f_{0,l}),\delta_{f_{0,l}}\right)
	=\sum_{l=1}^\infty l^{2s/d}
	\frac{\int_\R |\theta-f_{0,l}|^2e^{-\frac{n}{2}(\theta - f_{0,l})^2}
	e^{-|\gamma_l\theta|}d\vartheta}
	{\int_\R e^{-\frac{n}{2}(\vartheta - f_{0,l})^2} e^{-|\gamma_l\vartheta|}d\theta},
\]
where $\gamma_l=l^{1/2+\beta/d}$. As $n\to\infty$, the individual terms of the series in the right hand side have the form of a ratio between the integrals of two vanishing functions. We then proceed by deriving precise asymptotics for these terms, carefully keeping track of the dependence on $l$. In a similar spirit to the proof of Lemma \ref{Lem:StochPart}, we will split the series into `low', `intermediate' and `high' frequencies. For the first regime, suitable asymptotics can then be obtained via the Laplace method for integral approximation \cite{breitung2006asymptotic}. For the intermediate and high frequencies, we instead derive the required upper bounds via an ad hoc analysis, exploiting, among other things, the fact that the numerator of the marginal posterior distributions in \eqref{Eq:NewPost} can be expressed in terms of a certain special function, cf.~Lemma \ref{Lem:LaplPriorDenom}, whose asymptotics properties are explicitly available.

\begin{lemma}\label{Lem:DetPart}
Assume observations $X^{(n)}\sim P^{(n)}_{f_0}$ from the white noise model \eqref{Eq:WhiteNoise} for some fixed $f_0\in B^\beta_1$, some $\beta> d/2$. Let $\Pi_\beta(\cdot|X^{(n)})$ be the posterior distribution arising from a $\beta$-regular Besov-Laplace prior, with marginal posterior distributions $\pi_{\beta,n,l}(\cdot|X^{(n)})$, $l\in\N$, defined as in \eqref{Eq:LaplPost}  with $\alpha=\beta$. Then, for all $ s\in[0, \beta-d/2)$, and all positive sequences $M_n\to\infty$, we have for all sufficiently large $n$,
\[
	\sum_{l=1}^\infty l^{2s/d}
	\Wcal_2^2\left( \pi_{\beta,n,l}(\cdot| f_{0,l}),\delta_{f_{0,l}} \right)
	\lesssim M_nn^{-2(\beta-s)/(2\beta+d)},
\]
\end{lemma}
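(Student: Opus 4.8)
The plan is to estimate, for each frequency $l$, the quantity $\Wcal_2^2(\pi_{\beta,n,l}(\cdot|f_{0,l}),\delta_{f_{0,l}})$, which after the change of variable $\theta\mapsto\theta-f_{0,l}$ becomes a ratio of the form $\int_\R \theta^2 e^{-n\theta^2/2-\gamma_l|\theta+f_{0,l}|}d\theta \big/ \int_\R e^{-n\theta^2/2-\gamma_l|\theta+f_{0,l}|}d\theta$, and then to sum against the weights $l^{2s/d}$. As in the proof of Lemma \ref{Lem:StochPart}, I would split the series at a dyadic-type threshold: \emph{low} frequencies with $\gamma_l^2\lesssim n$ (i.e. $l\le L_n$ with $L_n^{(2\beta+d)/d}\simeq n$), where the Gaussian factor $e^{-n\theta^2/2}$ dominates, and \emph{high} frequencies with $\gamma_l^2\gtrsim n$, where the Laplace penalty $e^{-\gamma_l|\theta|}$ dominates. (The statement mentions an additional \emph{intermediate} regime, presumably a thin band around $\gamma_l\simeq\sqrt n$ handled by crude bounds; I would absorb it into whichever neighbouring regime is convenient.) For the low frequencies, the idea is that the measure is a small log-concave perturbation of $N(f_{0,l},1/n)$, so $\Wcal_2^2(\pi_{\beta,n,l}(\cdot|f_{0,l}),\delta_{f_{0,l}})$ should behave like the variance of that perturbed Gaussian, i.e. $\simeq 1/n$ up to a correction coming from the drift induced by the $|\theta|$-penalty; the Laplace method for integral approximation \cite{breitung2006asymptotic} gives this with explicit control on the $l$-dependence, yielding a contribution $\lesssim \frac1n\sum_{l\le L_n}l^{2s/d}\lesssim \frac1n L_n^{2s/d+1}\simeq n^{-2(\beta-s)/(2\beta+d)}$, exactly matching the target rate — this is the term that saturates the bound.

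For the high frequencies, where $\gamma_l\gg\sqrt n$, the posterior $\pi_{\beta,n,l}(\cdot|f_{0,l})$ concentrates on a scale $\simeq 1/\gamma_l$ (the Laplace scale) rather than $1/\sqrt n$, so one expects $\Wcal_2^2(\pi_{\beta,n,l}(\cdot|f_{0,l}),\delta_{f_{0,l}})\lesssim 1/\gamma_l^2=l^{-(1+2\beta/d)}$, and the tail contribution is then $\lesssim \sum_{l>L_n}l^{2s/d}l^{-(1+2\beta/d)}\simeq L_n^{2s/d-2\beta/d}\simeq n^{(2s-2\beta)/(2\beta+d)}=n^{-2(\beta-s)/(2\beta+d)}$, again within the target (in fact without needing the $M_n$ factor). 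To make the high-frequency estimate rigorous I would use the explicit representation of the normalising integral $\int_\R e^{-n\vartheta^2/2-\gamma_l|\vartheta|}d\vartheta$ in terms of the complementary error function (the "special function" alluded to via Lemma \ref{Lem:LaplPriorDenom}): completing the square in each half-line gives $\int_\R e^{-n\vartheta^2/2-\gamma_l|\vartheta|}d\vartheta = \sqrt{2\pi/n}\,e^{\gamma_l^2/(2n)}\,\erfc(\gamma_l/\sqrt{2n})$, and similarly the numerator with $\theta^2$ inserted can be written via derivatives of the same expression in $\gamma_l$ or via $\erfc$ and a polynomial correction. The known asymptotics $\erfc(z)\sim e^{-z^2}/(z\sqrt\pi)$ as $z\to\infty$ then convert the ratio into the bound $\lesssim 1/\gamma_l^2$ uniformly over $l>L_n$, with the role of $f_{0,l}$ controlled by noting $|f_{0,l}|\to 0$ (indeed $\sum_l|f_{0,l}|<\infty$ for $f_0\in B^\beta_1$, $\beta>d/2$), so the shift $\gamma_l f_{0,l}$ is uniformly small for the relevant $l$, or at worst contributes an extra summable factor.

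The main obstacle I anticipate is the low-frequency regime: applying the Laplace method here is delicate because the exponent $-\frac n2(\theta-f_{0,l})^2-\gamma_l|\theta+\cdot|$ is \emph{not} smooth at $\theta=-f_{0,l}$ (equivalently at the origin after recentring), so the classical smooth-phase Laplace expansion does not apply verbatim; one must either split each integral at the kink and apply a boundary-type Laplace/Watson estimate on each half-line (tracking that the interior maximum of the Gaussian sits at distance $\simeq \gamma_l/n$ from the kink, which interpolates between "kink matters" and "kink negligible" precisely around $\gamma_l\simeq\sqrt n$), or bound the perturbation crudely using $e^{-\gamma_l|\theta|}\le 1$ in the numerator and $e^{-\gamma_l|\theta|}\ge e^{-\gamma_l|f_{0,l}|}e^{-\gamma_l|\theta-f_{0,l}|}$ type inequalities in the denominator. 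The subtlety is to do this while retaining the sharp constant $\simeq 1/n$ (not merely $\lesssim 1/n$ times something growing in $l$) uniformly for $l\le L_n$, since any spurious $l$-dependent loss would destroy the match with $n^{-2(\beta-s)/(2\beta+d)}$; the $M_n$ slack in the statement presumably provides exactly the room needed to absorb the borderline $l\simeq L_n$ terms where the asymptotic regime is not yet fully in force. Combining the two (or three) regimes gives the claimed bound $\lesssim M_n n^{-2(\beta-s)/(2\beta+d)}$, completing the proof.
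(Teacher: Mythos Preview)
Your overall strategy---Laplace method for low frequencies, direct bounds for high frequencies, summing against $l^{2s/d}$---matches the paper's. But the intermediate regime is not a detail to be ``absorbed'': it is precisely where the $M_n$ factor enters, and your proposal leaves this unresolved. The paper introduces a \emph{second} threshold $J_n\simeq (n/M_n)^{d/(2\beta+d)}<L_n$ and applies the Laplace method only for $l\le J_n$, where $n/\gamma_l^2\gtrsim M_n\to\infty$ so that the asymptotics are genuinely valid. For $J_n<l\le L_n$ (where $n/\gamma_l^2$ is merely bounded, so Laplace asymptotics fail), the paper instead uses the $\erfc$ representation of Lemma~\ref{Lem:LaplPriorDenom} to lower-bound the denominator by $\gtrsim \gamma_l/\sqrt n\ge 1/\sqrt{M_n}$, and upper-bounds the numerator crudely by a constant (using $e^{-n(t-t_{0,l})^2/(2\gamma_l^2)}\le 1$ and the boundedness of $t_{0,l}=\gamma_l f_{0,l}$); summing $l^{-2(\beta-s)/d-1}$ over $J_n<l\le L_n$ then yields the $M_n$-inflated rate. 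So your instinct that ``$M_n$ provides the slack for borderline $l$'' is correct, but the mechanism is an explicit third regime, not an absorption.

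Two further points where your picture diverges from the paper. First, your concern about the kink in the Laplace method is largely a non-issue: after the paper's change of variable $t=\gamma_l\theta$, the \emph{phase} is the smooth Gaussian $-\frac{n}{2\gamma_l^2}(t-t_{0,l})^2$ and $e^{-|t|}$ plays the role of the \emph{amplitude}, which is smooth at the interior maximum $t=t_{0,l}$ (since $t_{0,l}\ne 0$ generically, and $t_0\in\ell^\infty_0$); the standard Laplace theorems from \cite{breitung2006asymptotic} then apply directly without splitting at the kink. Second, for the high frequencies $l>L_n$ the paper does \emph{not} use the $\erfc$ asymptotics you outline; it simply bounds the numerator by a constant and lower-bounds the denominator by $\int_\R e^{-(t-t_{0,l})^2}e^{-|t|}dt\gtrsim 1$ (using $n/\gamma_l^2\lesssim 1$), giving $\Wcal_2^2\lesssim \gamma_l^{-2}$ with no further work. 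Your $\erfc$-based approach for this regime would also succeed, but is more elaborate than needed; the $\erfc$ computation is reserved for the intermediate band.
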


\begin{proof}

Using the change of variable $t=\gamma_l\theta$, the series of interest equals
\begin{equation}
\label{Eq:LaplPriorStart}
\begin{split}
	\sum_{l=1}^\infty l^{2s/d}&
	\frac{\int_\R \gamma_l^{-2} (\gamma_l\theta-\gamma_lf_{0,l})^2e^{-\frac{n}{2\gamma_l^2}
	(\gamma_l\theta - \gamma_l f_{0,l})^2}e^{-|\gamma_l\theta|}d\theta}
	{\int_\R e^{-\frac{n}{2\gamma_l^2}(\gamma_l \vartheta - \gamma_lf_{0,l})^2} 
	e^{-|\gamma_l\vartheta|}d\vartheta}\\
	&\qquad\qquad\qquad\qquad\qquad\qquad=\sum_{l=1}^\infty l^{2s/d}\gamma_l^{-2}
	\frac{\int_\R  (t-t_{0,l})^2e^{-\frac{n}{2\gamma_l^2}
	(t - t_{0,l})^2}e^{-|t|}dt}
	{\int_\R e^{-\frac{n}{2\gamma_l^2}(\tau - t_{0,l})^2}
	 e^{-|\tau|}d\tau},
\end{split}
\end{equation}
having denoted $t_{0,l}:=\gamma_lf_{0,l}$, $l\in\N$. Note that, since $f_0\in b^\beta_1$ by assumption, we have
\beq\label{Eq:BesovDecay}
	\sum_{l=1}^\infty l^{-1}|t_{0,l}| = \sum_{l=1}^\infty  l^{-1}\gamma_l|f_{0,l}| 
	= \sum_{l=1}^\infty l^{\beta/d-1/2}|f_{0,l}|=\|f_0\|_{B^\beta_1}<\infty,
\eeq
whence $t_0:=(t_{0,l})_{l=1}^\infty$ is seen to necessarily be in $\ell^\infty_0$ (the space of bounded sequences that vanish asymptotically). Since $M_n\to\infty$ is an arbitrarily slow diverging sequence, we can assume without loss of generality that $M_n\le \log n$. We proceed by splitting the series in the right hand side of \eqref{Eq:LaplPriorStart} at levels $J_n, L_n\in\N$ satisfying $J_n\simeq (n/M_n)^{d/(2\beta+d)}$ and $L_n\simeq n^{d/(2\beta+d)}$. Note that for these, we have for $1\le l\le J_n$,
\begin{equation}
\label{Eq:GammaL1}
	\frac{n}{2\gamma_l^2} 
	\ge \frac{n}{2\gamma_{J_n}^2}
	= \frac{n}{2J_n^{1+2\beta/d}} 
	\simeq
	M_n  \to \infty
\end{equation}
for all $J_n< l\le L_n$, 
\begin{equation}
\label{Eq:GammaL2}
	1
	\simeq \frac{n}{2\gamma_{L_n}^2}
	\le
	 \frac{n}{2\gamma_l^2}
	 <
	\frac{n}{2\gamma_{J_n}^2}
	\simeq M_n,
\end{equation}
and for all $l>L_n$,
\begin{equation}
\label{Eq:GammaL3}
	 \frac{n}{2\gamma_l^2}
	<
	\frac{n}{2\gamma_{L_n}^2}
	 \simeq 1.
\end{equation}

	We bound the series for the low frequencies
\bal
\label{Eq:FirstSumL2}
	\sum_{l =1}^{J_n}&\gamma_l^{-2}l^{2s/d}
	\frac{\int_\R (t - t_{0,l})^2e^{-\frac{n}{2\gamma_l^2}(t - t_{0,l})^2}e^{-|t|}dt}
	{\int_\R e^{-\frac{n}{2\gamma_l^2}(t - t_{0,l})^2}e^{-|t|} dt },
\end{align}
via an application of the Laplace method for integral approximation. In particular, for all $l\le J_n$, recalling \eqref{Eq:GammaL1}, Theorem 41, p.~56, in \cite{breitung2006asymptotic} (with the choices, in their notation, $\beta^2:=n/(2\gamma_l^2)\to\infty$ and $x^*:=t_{0,l}$) implies that as $n\to\infty$, the denominator satisfies
\beq\label{Eq:AsympDenom}
	\int_\R e^{-|\tau|}e^{\frac{n}{2\gamma_l^2}[-(\tau - t_{0,l})^2]}d\tau
	\simeq 
	\sqrt{2\pi}\frac{e^{-|t_{0,l}|}}{\sqrt 2} \left( \frac{n}{ 2 \gamma_l^2}\right)^{-1/2}
	\simeq e^{-|t_{0,l}|}\frac{\gamma_l}{\sqrt n} .
\eeq
Further, in order to apply Theorem 43, p.~60, in \cite{breitung2006asymptotic} to the numerator, note that as $t\to t_{0,l}$, by a first order Taylor expansion,
$$
	e^{-|t|}
	=e^{-|t_{0,l}|} - e^{-|t_{0,l}|}\textnormal{sign}(t_0)(t-t_{0,l})+o(|t-t_{0,l}|),
$$
so that
$$
	(t-t_{0,l})^2e^{-|t|}=(t-t_{0,l})^2e^{-|t_{0,l}|} - e^{-|t_{0,l}|}\textnormal{sign}(t_0)(t-t_{0,l})^3+o(|t-t_{0,l}|^3).
$$
Therefore, Theorem 43, p.~60, in \cite{breitung2006asymptotic} (with the choices, in their notation, $\nu:=2$ and $h_0(t):=e^{-|t_{0,l}|}$, which is constant in $t$) implies that
\begin{align*}
	\int_\R  (t - t_{0,l})^2e^{-|t|}e^{-\frac{n}{2\gamma_l^2}(t - t_{0,l})^2}dt
	\simeq \sqrt{2}\Gamma(3/2)\frac{e^{-|t_{0,l}|}}{\sqrt 2}
	 \left(\frac{\sqrt n}{\sqrt{2}\gamma_l}\right)^{-3}
	\simeq e^{-|t_{0,l}|}\frac{\gamma_l^3}{n^{3/2}}.
\end{align*}
Combined with \eqref{Eq:AsympDenom}, this yields, via an application of the Euler-Maclaurin method,
\begin{align*}
	\sum_{l =1}^{J_n}&\gamma_l^{-2}l^{2s/d}
	\frac{\int_\R (t - t_{0,l})^2e^{-\frac{n}{2\gamma_l^2}(t - t_{0,l})^2}e^{-|t|}dt}
	{\int_\R e^{-\frac{n}{2\gamma_l^2}(t - t_{0,l})^2}e^{-|t|} dt }\\
	&\simeq 
	\frac{1}{n} \sum_{l =1}^{J_n}l^{2s/d}\\
	&\lesssim \frac{1}{n}J_n^{2s/d+1}
	\lesssim \frac{1}{n}(n/M_n)^{(2s+d)/(2\beta+d)}
	\le n^{(2s + d - 2\beta-d)/(2\beta + d)} = n^{-2(\beta - s)/(2\beta + d)}
\end{align*}
the first inequality in the last line holding for all sufficiently large $n\in\N$ since $J_n\to\infty$ as $n\to\infty$. Next, for the intermediate frequencies, using Lemma \ref{Lem:LaplPriorDenom} and recalling that $t_{0,l}=\gamma_l f_{0,l}$, we have
\beq\label{Eq:ErfcDenom}
\begin{split}
	\int_\R &e^{-\frac{n}{2\gamma_l^2}(\tau - t_{0,l})^2}
	 e^{-|\tau|}d\tau \\
	&\qquad\qquad =
	\frac{\sqrt{\pi}\gamma_l}{\sqrt {2n}} 
	e^{ \frac{\gamma_l^2}{2n}}
	\Bigg[e^{-t_{0,l}}
	\erfc\left(-\frac{\sqrt n}{\sqrt{2}}f_{0,l} + \frac{\gamma_l}{\sqrt {2n}}\right)
	+
	e^{t_{0,l}}
	\erfc\left(\frac{\sqrt n}{\sqrt{2}}f_{0,l} + \frac{\gamma_l}{\sqrt{2n}} 
	\right)
	\Bigg],
\end{split}
\eeq
where $\erfc$ is the complementary error function, cf.~\eqref{Eq:ERFC}. In view of \eqref{Eq:GammaL2}, we have $1/ \sqrt{M_n} \lesssim \gamma_l/\sqrt n \lesssim 1$ for all for all $J_n\le l\le L_n$, so that by the boundedness of $t_0\in\ell^\infty_0$ and the fact that $\erfc$ is a smooth, strictly decreasing function satisfying
$$
	\lim_{z\to-\infty}\erfc(z)=2;
	\qquad \lim_{z\to\infty}\erfc(z)=0,
$$
cf.~\cite[Chapter 7]{olver2010nist}, we may conclude that for all $J_n\le l\le L_n$ and all sufficiently large $n\in\N$,
$$
	e^{-t_{0,l}}
	\erfc\left(-\frac{\sqrt n}{\sqrt{2}}f_{0,l} + \frac{\gamma_l}{\sqrt {2n}}\right)
	+
	e^{t_{0,l}}
	\erfc\left(\frac{\sqrt n}{\sqrt{2}}f_{0,l} + \frac{\gamma_l}{\sqrt{2n}} 
	\right)
	\simeq 1.
$$
Combined with \eqref{Eq:ErfcDenom}, this implies
\begin{equation}
\label{Eq:erfcBound}
	\int_\R e^{-\frac{n}{2\gamma_l^2}(t - t_{0,l})^2}e^{-|t|} dt
	\gtrsim \frac{\gamma_l}{\sqrt n}\ge \frac{1}{\sqrt{M_n}},
\end{equation}
from which we obtain, for all sufficiently large $n\in\N$,
\begin{align*}
	\sum_{l =J_n+1}^{L_n}&\gamma_l^{-2}l^{2s/d}
	\frac{\int_\R (t - t_{0,l})^2e^{-\frac{n}{2\gamma_l^2}(t - t_{0,l})^2}e^{-|t|}dt}
	{\int_\R e^{-\frac{n}{2\gamma_l^2}(t - t_{0,l})^2}e^{-|t|} dt }\\
	&\lesssim
	\sqrt {M_n}\sum_{l =J_n+1}^{L_n}\gamma_l^{-2}l^{2s/d}
	\int_\R (t - t_{0,l})^2e^{-\frac{n}{2\gamma_l^2}(t - t_{0,l})^2}e^{-|t|}dt.
\end{align*}
Using again the boundedness of $t_0\in\ell^\infty_0$, and the fact that $e^{-n(t - t_{0,l})^2/({2\gamma_l^2})}\le 1$, the latter integrals are upper bounded (uniformly in $l$) by
\begin{align*}
	\int_\R t^2 e^{-|t|}dt + 2|t_{0,l}| \int_\R |t| e^{-|t|}dt + |t_{0,l}^2| \int_\R e^{-|t|}dt \lesssim 1
\end{align*}
so that by another application of the Euler-Maclaurin method, the second to last display is seen to be smaller than a multiple of
\begin{align*}
	&\sqrt{M_n}\sum_{l =J_n+1}^{L_n}l^{-2(\beta-s)/d-1}\\
	&\ =
	\sqrt{M_n}\left\{\int_{J_n+1}^{L_n}l^{-2(\beta-s)/d-1}dl 
	+ \sum_{k=1}^\infty \frac{B_k}{k!}\times\frac{d^{k-1}}{dl^{k-1}}l^{-2(\beta-s)/d-1}\Big|_{J_n+1}^{L_n}\right\}\\
	&\ =\sqrt{M_n}\Bigg\{\frac{(J_n+1)^{-2(\beta-s)/d}-L_n^{-2(\beta-s)/d}}{(\beta-s)/d}
	+\frac{(J_n+1)^{-2(\beta-s)/d-1}-L_n^{-2(\beta-s)/d-1}}{2}\\
	&\ \quad\ 
	+\sum_{k=2}^\infty \frac{B_k}{k!}\times\frac{d^{k-1}}{dl^{k-1}}l^{-2(\beta-s)/d-1}
	\Big|_{J_n+1}^{L_n}
	\Bigg\}\\
	&\ \lesssim
	\sqrt {M_n} J_n^{-2(\beta-s)/d}\\
	&\lesssim \sqrt {M_n} (n/M_n)^{-2(\beta-s)/(2\beta + d)}
	=n^{-2(\beta-s)/(2\beta+d)}
	M_n^{(3\beta-2s+d/2)/(2\beta+d)},
\end{align*}
the second to last inequality holds for all $n\in\N$ large enough since $L_n^{-1}=o(J_n^{-1})$. Finally, for the series of the high frequencies, in view of \eqref{Eq:GammaL3} and the boundedness of $t_0\in\ell^\infty_0$,  we have, arguing similarly to the previous case for the numerators,
\begin{align*}
	\sum_{l =L_n+1}^{\infty}&\gamma_l^{-2}l^{2s/d}
	\frac{\int_\R |t - t_{0,l}|^2e^{-\frac{n}{2\gamma_l^2}(t - t_{0,l})^2}e^{-|t|}dt}
	{\int_\R e^{-\frac{n}{2\gamma_l^2}(t - t_{0,l})^2}e^{-|t|} dt }\\
	&\lesssim 
	\sum_{l =L_n+1}^{\infty}\gamma_l^{-2}l^{2s/d}
	\frac{1}
	{\int_\R e^{-(t - t_{0,l})^2}e^{-|t|} dt }\\
	&\le
	\sum_{l =L_n+1}^{\infty}\gamma_l^{-2}l^{2s/d}
	 \frac{1}
	{e^{-|t_{0,l}|^2}\int_\R e^{-t^2}e^{-(2\sup_{l\ge1}|t_{0,l}| + 1)|t|} dt }\\
	&\lesssim
	\sum_{l =L_n+1}^{\infty}\gamma_l^{-2}l^{2s/d} 
	= \sum_{l =L_n+1}^{\infty}l^{-2(\beta-s)/d-1}\lesssim L_n^{-2(\beta-s)/d}
	\simeq n^{-2(\beta-s)/(2\beta+d)}.
\end{align*}

	Combining the three obtained bounds yields that for all sufficiently large $n\in\N$,
\begin{align*}
	\sum_{l =L_n+1}^{\infty}&\gamma_l^{-2}l^{2s/d}
	\frac{\int_\R |t - t_{0,l}|^2e^{-\frac{n}{2\gamma_l^2}(t - t_{0,l})^2}e^{-|t|}dt}
	{\int_\R e^{-\frac{n}{2\gamma_l^2}(t - t_{0,l})^2}e^{-|t|} dt }
	\lesssim n^{-2(\beta-s)/(2\beta+d)}
	M_n^{(3\beta-2s+d/2)/(2\beta+d)}
\end{align*}
concluding the proof in view of the arbitrariness of $M_n$.
\end{proof}

	In the next auxiliary lemma, we express the denominators appearing in \eqref{Eq:LaplPriorStart} in terms of the complementary error function, cf.~\cite[Chapter 7]{olver2010nist},
\begin{equation}
\label{Eq:ERFC}
	\erfc(z):=\frac{2}{\sqrt \pi}\int_z^\infty e^{-s^2}ds,
	\qquad z\in\R.
\end{equation}

\begin{lemma}\label{Lem:LaplPriorDenom}
For all $n\in\N$, $t_0\in\R$ and $\gamma>0$, it holds that, with $f_0:=t_0/\gamma$,
\bals
	\int_\R &e^{-\frac{n}{2\gamma^2}(t - t_0)^2}
	e^{-|t|} dt\\
	&=
	\frac{\sqrt{\pi}\gamma}{\sqrt {2n}} 
	e^{ \frac{\gamma^2}{2n}}
	\Bigg[e^{-t_0}
	\erfc\left(-\frac{\sqrt n}{\sqrt{2}}f_0 + \frac{\gamma}{\sqrt {2n}}\right)
	+
	e^{t_0}
	\erfc\left(\frac{\sqrt n}{\sqrt{2}}f_0 + \frac{\gamma}{\sqrt{2n}} 
	\right)
	\Bigg].
\end{align*}
\end{lemma}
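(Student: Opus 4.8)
The plan is a direct computation, exploiting the piecewise-linear structure of $t\mapsto|t|$. First I would split the integral at the origin according to the sign of $t$, writing
\[
\int_\R e^{-\frac{n}{2\gamma^2}(t - t_0)^2}e^{-|t|}\,dt
= \int_0^\infty e^{-\frac{n}{2\gamma^2}(t - t_0)^2 - t}\,dt
+ \int_{-\infty}^0 e^{-\frac{n}{2\gamma^2}(t - t_0)^2 + t}\,dt,
\]
and then converting the second integral into one over $(0,\infty)$ via the substitution $t\mapsto -t$, which turns $(t - t_0)^2$ into $(t + t_0)^2$. This reduces the whole task to evaluating integrals of the form $\int_0^\infty e^{-a(t\mp t_0)^2 - t}\,dt$ with $a := n/(2\gamma^2)$.

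Next, for each such integral I would complete the square in the exponent. For the $-$ sign one has $-a(t - t_0)^2 - t = -a(t - c)^2 + (ac^2 - at_0^2)$ with $c := t_0 - 1/(2a)$, and a one-line computation gives $ac^2 - at_0^2 = -t_0 + 1/(4a)$. Factoring out the constant $e^{-t_0 + 1/(4a)}$ and substituting $u := \sqrt a\,(t - c)$ identifies the remaining integral as $\tfrac{1}{\sqrt a}\int_{-\sqrt a\,c}^\infty e^{-u^2}\,du = \tfrac{\sqrt\pi}{2\sqrt a}\,\erfc(-\sqrt a\,c)$, by the definition \eqref{Eq:ERFC}. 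The $+$ sign case is obtained simply by replacing $t_0$ with $-t_0$ throughout, producing the factor $e^{t_0}$ and the argument $+\sqrt a\,c$ inside $\erfc$.

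Finally I would substitute back $a = n/(2\gamma^2)$ and $f_0 = t_0/\gamma$ to rewrite the constants: $1/(4a) = \gamma^2/(2n)$, $\sqrt\pi/(2\sqrt a) = \sqrt\pi\,\gamma/\sqrt{2n}$, and $-\sqrt a\,c = -\sqrt a\,t_0 + 1/(2\sqrt a) = -\frac{\sqrt n}{\sqrt 2}f_0 + \frac{\gamma}{\sqrt{2n}}$, with the sign of the $f_0$-term flipped in the other contribution. Summing the two pieces yields precisely the claimed identity. There is no genuine obstacle here—the argument is entirely elementary—so the only point requiring care is the bookkeeping when completing the square and collecting the constants; in particular one must track $c$ (hence the arguments of $\erfc$) carefully so that the lower limit of the Gaussian tail integral emerges with exactly the stated sign and scaling.
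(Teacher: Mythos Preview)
Your proposal is correct and follows essentially the same approach as the paper: split at the origin, reflect the negative half-line to reduce to two integrals over $(0,\infty)$ differing only in the sign of $t_0$, complete the square, and identify the resulting Gaussian tail as an $\erfc$. The paper carries out the square-completion in the original variables $n,\gamma,f_0$ rather than via your auxiliary $a,c$, but the computations are the same and your bookkeeping of the constants and $\erfc$ arguments is accurate.
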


\begin{proof}
Decompose the integral into
\begin{align*}
	\int_0^\infty e^{-\frac{n}{2\gamma ^2}(t - t_0 )^2}
	 e^{-t} dt
	&+\int_{-\infty}^0e^{-\frac{n}{2\gamma ^2}(-(-t) - t_0 )^2}
	 e^{-(-t)} dt\\
	 &=\int_0^\infty e^{-\frac{n}{2\gamma ^2}(t - t_0 )^2}
	 e^{-t} dt
	 +\int_0^\infty e^{-\frac{n}{2\gamma ^2}( \tau - ( - t_0) )^2}
	e^{- \tau} d \tau,
\end{align*}
having used the substitution $ \tau=-t$ in the second integral. The first equals, recalling the notation $ f_0 =t_0/\gamma$,
\begin{align*}
	&e^{-\frac{n}{2}f_0 ^2}\int_0^\infty e^{-\frac{n}{2\gamma ^2}t^2 
	+ (n f_0 /\gamma  - 1) t } dt\\
	&\ =e^{-\frac{n}{2}f_0 ^2+\frac{\gamma ^2}{2n}(n f_0 /\gamma  - 1)^2}
	 \int_0^\infty
	e^{-[(\frac{\sqrt n}{\sqrt{2}\gamma }t)^2
	-2\frac{\sqrt n}{\sqrt{2}\gamma }t(n f_0 /\gamma  - 1)\frac{\gamma }{\sqrt{2 n}} 
	+\frac{\gamma ^2}{2n}(n f_0 /\gamma  - 1)^2]}dt\\
	&\ =e^{-\gamma  f_0  + \frac{\gamma ^2}{2n}}
	\int_0^\infty
	e^{-[\frac{\sqrt n}{\sqrt{2}\gamma }t
	-\frac{1}{\sqrt{2n}}(n f_0  - \gamma )]^2}dt.
\end{align*} 
Using the substitution $s:=\frac{\sqrt n}{\sqrt{2}\gamma }t-\frac{1}{\sqrt{2n}}(n f_0  - \gamma )$, we then obtain
\begin{align*}
	\int_0^\infty e^{-\frac{n}{2\gamma ^2}(t - t_0 )^2}
	 e^{-t} dt
	 &=e^{-\gamma  f_0  + \frac{\gamma ^2}{2n}}
	\frac{\sqrt 2\gamma }{\sqrt n}\int_{-\frac{1}{\sqrt{2n}}(n f_0  - \gamma )}^\infty e^{-s^2}ds\\
	 &= e^{-t_0  + \frac{\gamma ^2}{2n}}
	\frac{\sqrt{\pi}\gamma }{\sqrt {2 n}}
	\erfc\left(-\frac{\sqrt{n}}{\sqrt{2}}f_0  
	+ \frac{\gamma }{\sqrt{2n}}\right).
\end{align*}
An application of the same formula with $t_0$ replaced by $-t_0$ then shows that
$$
	\int_0^\infty e^{-\frac{n}{2\gamma ^2}( \tau - ( - t_0) )^2}
	e^{- \tau} d \tau
	=
	e^{t_0  + \frac{\gamma ^2}{2n}}
	\frac{\sqrt{\pi}\gamma }{\sqrt {2 n}}
	\erfc\left(\frac{\sqrt{n}}{\sqrt{2}}f_0  
	+ \frac{\gamma }{\sqrt{2n}}\right)
$$
which combined with the previous display proves the claim.
\end{proof}

\paragraph{Acknowledgements.} 
Stefano Favaro and Matteo Giordano gratefully acknowledge the financial support from the Italian Ministry of Education, University and Research (MIUR), ``Dipartimenti di Eccellenza'' grant 2023-2027. Matteo Giordano was partially supported by MIUR, PRIN project 2022CLTYP4.

\bibliography{References}

\end{document}